\newcommand{\rw}[1]{{#1}}
\newcommand{\R}{\mathbb{R}} 
\newcommand{\K}{\mathbb{K}} 
\newcommand{\N}{\mathbb{N}}
\newcommand{\dc}{\mathcal{D}}
\newcommand{\Lie}{\mathcal{L}}
\newcommand{\1}{\mathbf{1}}
\newcommand{\psd}{\mathbb{S}}
\DeclarePairedDelimiter{\abs}{\lvert}{\rvert}
\DeclarePairedDelimiter{\norm}{\lVert}{\rVert}
\DeclarePairedDelimiter{\floor}{\lfloor}{\rfloor}
\DeclarePairedDelimiterX{\inp}[2]{\langle}{\rangle}{#1, #2}
\DeclarePairedDelimiter{\supp}{\textrm{supp}(}{)}
\DeclarePairedDelimiter{\Mp}{\mathcal{M}_+(}{)}
\newtheorem{thm}{Theorem}[section]
\newtheorem{lem}[thm]{Lemma}
\newtheorem{prop}[thm]{Proposition}
\newtheorem{cor}{Corollary}
\newtheorem{defn}{Definition}[section]
\newtheorem{rmk}{Remark} 
\title{\LARGE \bf Quantifying the Safety of Trajectories \\using Peak-Minimizing Control
}
\renewcommand\footnotemark{}
\author{Jared Miller$^1$, Mario Sznaier$^1$
\thanks{$^1$J. Miller, and M. Sznaier are with the Robust Systems Lab,  ECE Department, Northeastern University, Boston, MA 02115. (e-mails: miller.jare@northeastern.edu, msznaier@coe.neu.edu).}
\thanks{J. Miller and M. Sznaier were partially supported by NSF grants   ECCS--1808381 and CNS--2038493, AFOSR grant FA9550-19-1-0005, and ONR grant N00014-21-1-2431.  
J. Miller was in part supported by the Chateaubriand Fellowship (performed at LAAS-CNRS) of the Office for Science \& Technology of the Embassy of France in the United States, and by the International Student Exchange Program from AFOSR.}
}
\begin{document}

\maketitle

\begin{abstract}
\label{sec:abstract}
This work quantifies the safety of trajectories of a dynamical system by the perturbation intensity required to render a system unsafe (crash into the unsafe set). 
In the data-driven setting, this perturbation intensity can be interpreted as the minimal data corruption required for a data-consistent model to crash.
Computation of this measure of safety is posed as a peak-minimizing optimal control problem. Convergent lower bounds on the minimal peak value of controller effort are computed using polynomial optimization and the moment-Sum-of-Squares hierarchy. 


\end{abstract}
\section{Introduction}
\label{sec:introduction}

A trajectory starting at an initial point $x_0 \in X$ following dynamics $\dot{x}= f_0(t, x)$ is safe with respect to the unsafe set $X_u$ in the time horizon $t \in [0, T] \subset [0, \infty)$ if there does not exist a time $t'$ such that $x(t' \mid x_0)$ is a member of $X_u$. The set $X_0$ is safe with respect to $X_u$ if all initial points $x_0 \in X_0$ generate safe trajectories. 
This paper quantifies the safety of trajectories by maximum control effort (\ac{OCP} cost) needed to crash the agent into the unsafe set. 
An example of this type of safety result is if tilting a car's steering wheel by a maximum extent of  $3^\circ$ over the course of its motion would cause the car to crash. In the data-driven framework, a trajectory is labeled safe if it would require the true system to have a large constraint violation against any of its state-derivative data observations in order to crash.
The process of analyzing safety by computing the peak-minimizing-\ac{OCP} cost will be referred to as `crash safety'.

Let $W \subset \R^L$ be a compact input set, and let $\mathcal{W}$ be the class of functions whose graphs satisfy $\rw{\forall t \in [0, T]: \ w(t) \in W}$.
Given a control-cost $J(w)$ (which can represent the intensity of the data corruption), we can pose the following peak-minimizing free-terminal-time \ac{OCP}:
\begin{equation}
    \label{eq:crash_traj}
    \begin{aligned}
    Q^* = & \inf_{t, \ x_0, \ w}  \ \sup_{t' \in [0, t]} J(w(t'))\\
    & \dot{x}(t') =  f(t', x(t'), w(t')) & & \forall t' \in [0, T] \\
    & x(t \mid x_0, w(\cdot)) \in X_u \\ 
    & w(\cdot) \in \mathcal{W}, \ t \in [0, T], \ x_0 \in X_0.
    \end{aligned}
\end{equation}


The variables of \eqref{eq:crash_traj} are the stopping time $t$, the initial condition $x_0$, and the input process $w(\cdot)$. 
Assuming for the purposes of this introduction that $J(0)=0, \ \forall w \neq 0: J(w) > 0$ and that $J$ possesses connected \rw{sublevel} sets \rw{(such as $J$ quadratic)},
the set $x_0$ is unsafe if $Q^* = 0$ because the process $w(t) = 0$ is sufficient for the trajectory to reach a terminal set of $X_u$. 
The value of a nonzero $Q^*$ then measures the amount of control effort (perturbation intensity) needed to render the trajectory unsafe. Connected level sets are imposed to add interpretability to $Q^*$; a disconnected choice of $J$ with multiple local minima could yield a large input $w$ with a low $Q^*$.

A running cost $\int_{0}^{T} J(w(t')) dt$ yielding a standard-form (Lagrange) \ac{OCP} may also be applied, but we elect to use a peak-minimizing cost $\max_{t'} J(w(t'))$ in order to penalize perturbation intensity.
The running-cost would penalize a low-magnitude control being applied for an extended period of time, while peak-minimizing control reduces the intensity. 

Peak-minimizing control problems, such as in  \eqref{eq:crash_traj}, are a particular form of robust optimal control in which the minimizing agents are $(t, x_0, w(\cdot))$ and the maximizing agent is $t' \in [0, t]$. Necessary conditions for these robust programs may be found in \cite{vinter2005minimax}. Instances of  peak-minimizing control include minimizing the maximum number of infected persons in an epidemic under budget constraints \cite{molina2022optimal} and choosing flight parameters to minimize the maximum skin temperature during atmospheric reentry \cite{lu1988minimax, kreim1996minimizing}. The work in \cite{molina2022equivalent} outlines conversions between peak-minimizing \acp{OCP} and equivalent Mayer-form \acp{OCP} (terminal cost only). 

This paper continues a sequence of research about quantifying the safety of trajectories. 
Unsafety can be proven using path-planning by finding a feasible pair $(t', x_0) \in [0, T] \times X_0$ such that $x(t' \mid x_0) \in X_0$.
Barrier \cite{prajna2004safety, prajna2006barrier} and Density functions \cite{rantzer2004analysis} are binary certificates confirming that there does not exist an unsafe trajectory based on the satisfaction of nonnegativity constraints. Safety margins use maximin peak estimation to estimate the $X_u$-representing-inequality-constraint violation \cite{miller2020recovery}. The distance of closest approach between a trajectory starting in $X_0$ and points in $X_u$ is a more interpretable measure of safety than abstract safety margins \cite{miller2021distance}. Even so, distance estimation does not tell the full story; a trajectory may lie close to $X_u$ in the sense of distance, but it could require a large value of $Q^*$ to render the same trajectory unsafe.

Direct solution of \acp{OCP} using the \ac{HJB} equation or the Pontryagin Maximum Principle may be challenging, especially when solutions do not exist in closed form \cite{liberzon2011calculus}. These generically non-convex \acp{OCP} may be lifted into convex infinite-dimensional \acp{LP} in occupation measures \cite{lewis1980relaxation}, whose dual \ac{LP} involve subvalue functions satisfying \ac{HJB} inequalities.
These infinite-dimensional \acp{LP} produce lower-bounds on the true \ac{OCP}, with equality holding under compactness and regularity conditions.
The moment-\ac{SOS} hierarchy of \acp{SDP} may be used to produce a rising sequence of lower bounds to the true \ac{OCP} if the dynamics $f(t, x, w)$ are polynomial and the sets $(W, X, X_0, X_u)$ are \ac{BSA} \cite{henrion2008nonlinear}. This infinite-dimensional \ac{LP} and finite-dimensional \ac{SDP} pattern has also been applied to reachable set estimation \cite{henrion2013convex, korda2013inner}, peak estimation \cite{fantuzzi2020bounding, miller2021uncertain}, and maximum controlled invariant set estimation \cite{korda2014convex}.

This paper transforms program \eqref{eq:crash_traj} into the Mayer \ac{OCP} using \cite{molina2022equivalent}, relaxes the nonconvex \ac{OCP} into an infinite-dimensional \ac{LP} with the same objective value \cite{lewis1980relaxation}, and then lower-bounds $Q^*$ by using the moment-\ac{SOS} hierarchy \cite{henrion2008nonlinear, lasserre2009moments}. The robust counterpart method of  \cite{ben2009robust, miller2023robustcounterpart} will be used to simplify the infinite-dimensional \ac{LP} when $W$ and the graph of $J$ are both polytopic.

Contributions of this work include,
\begin{itemize}
    \item An interpretation of peak-minimal control costs as a quantification of safety
    \item An infinite-dimensional \ac{LP} that produces the peak-minimal cost under compactness, regularity, and convexity conditions
    \item A subvalue functional that lower-bounds the crash-safety effort
    \item An application of crash-safety towards data-driven safety analysis
\end{itemize}

This paper has the following structure: 
Section \ref{sec:preliminaries} reviews the notations, the peak-minimizing control framework of \cite{molina2022equivalent}, and \ac{SOS} methods. Section \ref{sec:crash_lp} formulates an infinite-dimensional \ac{LP} to solve \eqref{eq:crash_traj}. Section \ref{sec:crash_robust} applies the crash-safety framework towards $L_\infty$-penalized data-driven analysis using robust Lie counterparts from 
 \cite{miller2023robustcounterpart}. Section \ref{sec:crash_sos} forms \ac{SOS} programs for crash-safety and tabulates their computational complexity. 
Section \ref{sec:crash_subvalue} evaluates the safety of points inside $X$ by a subvalue function of the crash-safety cost. 
 Section \ref{sec:crash_examples} provides demonstrations of crash-safety. 
 Section \ref{sec:conclusion} concludes the paper.
\section{Preliminaries}
\label{sec:preliminaries}

\subsection{Acronyms/Initialisms}
\begin{acronym}[WSOS]
\acro{BSA}{Basic Semialgebraic}


\acro{LMI}{Linear Matrix Inequality}
\acroplural{LMI}[LMIs]{Linear Matrix Inequalities}
\acroindefinite{LMI}{an}{a}

\acro{HJB}{Hamilton-Jacobi-Bellman}

\acro{LP}{Linear Program}
\acroindefinite{LP}{an}{a}

\acro{OCP}{Optimal Control Problem}
\acroindefinite{OCP}{an}{a}





\acro{PD}{Positive Definite}

\acro{PSD}{Positive Semidefinite}

\acro{SDP}{Semidefinite Program}
\acroindefinite{SDP}{an}{a}

\acro{SDR}{Semidefinite Representable}
\acroindefinite{SDR}{an}{a}

\acro{SOS}{Sum of Squares}

\acro{WSOS}{Weighted Sum of Squares}

\end{acronym}

\subsection{Notation}

The set of real numbers is $\R$ and the $n$-dimensional real vector spaces is $\R^n$. The all-ones vector is $\1$. The set of natural numbers is $\N$ and the set of $n$-dimensional multi-indices is $\N^n$. The set of natural numbers between $a$ and $b$ is $a..b \subset \N$. The cone of $n \times n$ symmetric \ac{PSD} matrices is $\psd^n_+$.

The set of polynomials of an indeterminate $x$ with real-valued coefficients is $\R[x]$. The degree of a polynomial $p \in \R[x]$ is $\deg p$. The vector space of polynomials up to degree $d \in \N$ is $\R[x]_{\leq d}$. The coefficients of a polynomial $p \in \R[x]$ are $\text{coeff}_x(p(x))$.

The ring of continuous functions over a space $S \subseteq \R^n$ is $C(S)$. The set of first-differentiable functions over $S$ is $C^1(S) \subset C(S)$. The subcone of nonnegative functions over $S$ is $C_+(S) \subset C(S)$.

The set of nonnegative Borel measures over $S$ is $\Mp{S}$. Given a measure $\mu \in \Mp{S}$, the support $\supp{\mu}$ is the locus of points $s' \in S$ such that every open neighborhood of $s'$ has a nonzero measure with respect to $\mu$.
A pairing $\inp{\cdot}{\cdot}$ may be defined between $f \in C(S)$ and $\mu \in \Mp{S}$ by $\inp{f}{\mu} = \int_{S} f(s) d \mu(s)$. This pairing defines an inner product between $C_+(S)$ and $\Mp{S}$ when the set $S$ is compact. The mass of a measure $\mu \in \Mp{S}$ is $\inp{1}{\mu},$ and $\mu$ is a probability measure if $\inp{1}{\mu}=1$. The Dirac delta $\delta_{s'}$ is the unique probability supported only at $s' \in S$, with $\forall f \in C(S): \inp{f}{\delta_{s'}} = s'$. Given a curve $s: [0, T] \times S$, the occupation measure $\mu_{s}$ of $s(t)$ in the times $[0, T]$ is the unique measure satisfying $\forall v \in C([0, T] \times S): \inp{v(t, s)}{\mu_s} = \int_{0}^T v(t, s(t)) dt$.

\subsection{Sum of Squares}

Verifying that a polynomial $p \in \R[x]$ is nonnegative $\forall x \in \R^n$ is generically an NP-hard problem (except for $p$ quadratic, univariate, or bivariate quartic) \cite{hilbert1888darstellung}. A sufficient condition for $p$ to be nonnegative is if there exists $N$ factors $\{p_k \in \R[x]\}_{k=1}^N$ such that $p = \sum_{k=1}^N p_k(x)^2$. Such a $p$ is therefore called an \ac{SOS} polynomial. The cone of \ac{SOS} polynomials is $\Sigma[x]$, and the subset of degree $\leq 2d$ \ac{SOS} polynomials is $\Sigma[x]_d \subset \Sigma[x]$. To each polynomial $p \in \Sigma[x]_d$, there exists an $s$-dimensional vector of polynomials $v(x)$ (e.g. monomials up to degree $d$) and \iac{PSD} \textit{Gram} matrix $Q \in \psd_+^s$ such that $p(x) = v(x)^T Q v(x)$. When the monomial basis is used, the Gram matrix has dimension $\binom{n+d}{d}$. Verification of $p \in \Sigma[x]_{2d}$ at fixed degree can be performed by solving \iac{SDP}. The per-iteration scaling of Interior Point Methods for solving this \ac{SDP}  rises in a jointly polynomial manner with $n$ and $d$ (with  $O(n^{6d})$ and $O(d^{4n})$) \cite{alizadeh1995interior} \cite{miller2022eiv_short}.

\Iac{BSA} set $\K \in \R^n$ is a set formed by the locus of a finite number of inequality and equality constraints of bounded degree:
\begin{align}
\label{eq:bsa_set}
    \K &= \{x \mid \forall i=1..N_g: \ g_i(x)\geq 0, \ \forall i=1..N_h: \ h_j(x)=0\}.
\end{align}
A sufficient condition for $p \in \R[x]$ to be nonnegative over $\K$ is that there exists multipliers $\sigma_0, \sigma_i, \phi_j$ such that
\cite{putinar1993compact}
\begin{subequations}
\label{eq:putinar}
    \begin{align}
        & p(x) = \sigma_0(x) + \textstyle \sum_i {\sigma_i(x)g_i(x)} + \textstyle \sum_j {\theta_j(x) h_j(x)}\\
        &\exists  \sigma_0(x) \in \Sigma[x], \quad \sigma_i(x) \in \Sigma[x], \quad \phi_j \in \R[x]. \label{eq:putinar_variables}
    \end{align}
\end{subequations}
The \ac{WSOS} cone $\Sigma[\K]$ is the set of polynomials that admit a representation as in \eqref{eq:putinar}. The truncated \ac{WSOS} set $\Sigma[\K]_{d}$ is the set of polynomials where the certificate in \eqref{eq:putinar} has $\deg \sigma_0 \leq 2d, \ \forall i: \deg \sigma_i g_i \leq 2d, $ and $\forall j: \deg \phi_j h_j \leq 2d$. Verification of $p \in \Sigma[\K]$ could require multipliers that have an exponential degree in $n$ and $\deg p$ \cite{nie2007complexity}.

The \ac{BSA} set $\K$ is \textit{Archimedean} if there exists $R \in [0, \infty)$ such that $R - \norm{x}_2^2 \in \Sigma[\K]$. Archimedeanness is a stronger property than compactness \cite{cimpric2011closures}. If there exists a known $\tilde{R}$ verifying compactness such that $\K \subseteq \{x \in \R^n \mid \norm{x}_2^2 \leq \tilde{R}\},$ then the compact set $\K$ may be rendered Archimedean by adding the redundant ball constraint $\tilde{R} - \norm{x}^2_2 \geq 0$ to the list of constraints in \eqref{eq:bsa_set}. When $\K$ is Archimedean, the Putinar Positivestellensatz states that every positive polynomial over $\K$ is a member of $\Sigma[\K]$ \cite[Theorem 1.3]{putinar1993compact}. The moment-\ac{SOS} hierarchy is the process of increasing the degree $d$ in $\Sigma[\K]_{d}$ to eventually include the set of all positive polynomials.


\section{Crash-Safety Program}
\label{sec:crash_lp}

This section applies peak-minimizing control conversion to the general crash-safety task in \eqref{eq:crash_traj}. The specific crash-safety task used in the data-driven framework will be subsequently considered in Section \ref{sec:crash_robust}.

\subsection{Motivating Example}

This subsection provides an example demonstrating how \eqref{eq:crash_traj} can be used for safety quantification.
The Flow dynamics from \cite{rantzer2004analysis} are
\begin{align}
    \label{eq:flow}
    \dot{x} = \begin{bmatrix}x_2 \\ -x_1 -x_2 + \frac{1}{3}x^3_1\end{bmatrix}.
\end{align}
This example will perturbed \eqref{eq:flow} by an uncertainty process restricted to $\forall t: \ w(t) \in [-1, 1]$:
\begin{align}
    \label{eq:flow_w1}
    \dot{x} = \begin{bmatrix}x_2 \\ -x_1 -x_2 + \frac{1}{3}x^3_1\end{bmatrix} + w \begin{bmatrix}
        0 \\ 1
    \end{bmatrix}.
\end{align}

Trajectories evolve over a time horizon of $T=5$ in the state set $X = [-0.6, 1.75] \times [-1.5,1.5]$ with a maximum corruption of $J_{\max}=2$. 
System dynamics are illustrated by the blue streamlines in Figure \ref{fig:same_dist}. The red half-circle is the unsafe set $X_u = \{x \mid x_2 \leq -0.5, \ (x_1 - 1)^2 + (x_2+0.5)^2 \leq 0.5^2\}$. Two trajectories of this system are highlighted. The green trajectory starts from the top initial point $X^1_0 = [0; 1],$ and the yellow trajectory starts from the bottom initial point $X^2_0 = [1.2966 -1.5]$. The distance of closest approach to $X_u$ is $0.2498$ for both trajectories (matching up to four decimal places). The $0.2498$-contour of constant distance is displayed by the red curve surrounding $X_u$.

\begin{figure}[h]
    \centering
    \includegraphics[width=0.6\linewidth]{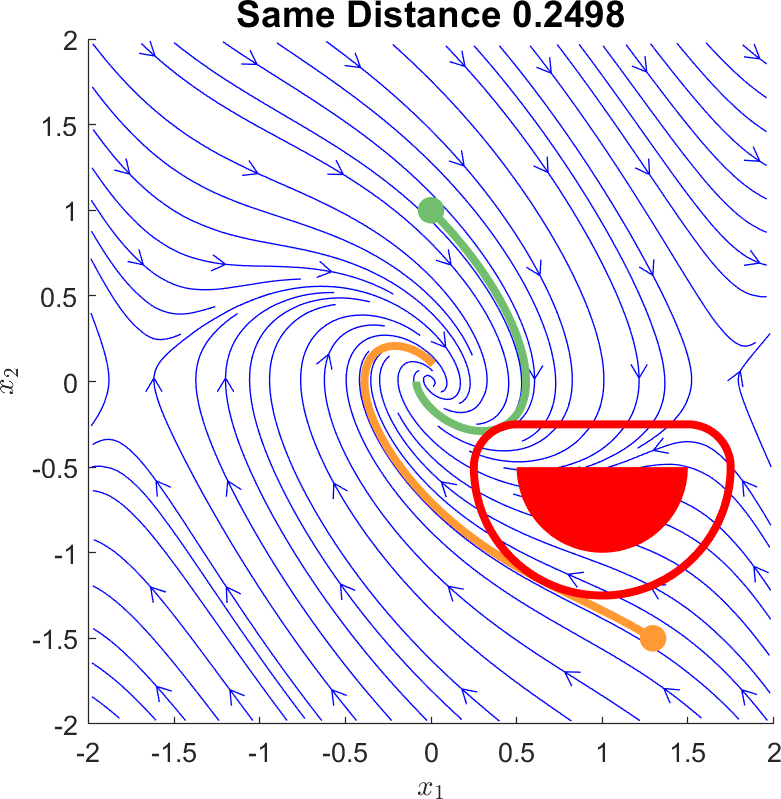}
    \caption{Two trajectories with nearly the same distance but different crash-bounds}
    \label{fig:same_dist}
\end{figure}

The \ac{OCP} solver CasADi \cite{andersson2019casadi} returns approximate bounds for \eqref{eq:crash_traj} of $Q^* \approx 0.3160$ for $X_0^1$ (green) and $Q^* \approx 0.6223$ for $X_0^2$ (yellow).  The points $(X^1_0, X^2_0)$ return nearly identical distances of closest approach, but $X^2_0$ may be judged as safer than $X^1_0$ under the disturbance model in \eqref{eq:flow} due to its higher crash-bound value. Degree-4 \ac{SOS} tightenings of \eqref{eq:crash_traj_z} developed in the sequel return lower bounds of $0.3018$ and $0.5273$ respectively.

\subsection{Assumptions}
We will require \rw{assumptions in order to ensure that the developed infinite-dimensional convex \ac{LP} will have the same optimal value as \eqref{eq:crash_traj}. These assumptions compactnes, convexity, and regularity assumptions arise from the optimal control work in \cite{vinter1978equivalence}.}

\begin{enumerate}
    \item[A1] The sets $[0,T], [0, J_{\max}], X, W, X_u, X_0$ are all compact.
    \item[A2] The image $f(t, x, W)$ is convex for each fixed $(t, x)$.
    \item[A3] The dynamics function $f(t, x, w)$ is Lipschitz in the compact domain $[0, T] \times X \times W$.
    \item[A4] If $x(t \mid x_0, w) \, \in \partial X$ for some $t \in [0, T], \ x_0 \in X_0, \ w \in \mathcal{W}$, then $x(t' \mid x_0) \not\in X \ \forall t' \in (t, T]$.
\end{enumerate}

\rw{A1 ensures boundedness and boundary-inclusion of all relevant sets. A2 guarantees that it suffices to analyze trajectories of the differential inclusion $\dot{x} \in \text{conv}(f(t, x, W))$. This paper will generally deal with the case where $f$ is affine in $w$, so that A2 will therefore be satisfied. A3 ensures Lipschitz regularity (and thus uniqueness) of trajectories given an input $w(\cdot)$.}
A4 is an assumption of non-return used in \cite{miller2021distance} that is weaker than ensuring $X$ is an invariant set.


\subsection{Formulation}

We use the peak-minimizing control conversion of \cite{molina2022equivalent} on program \eqref{eq:crash_traj}.

\begin{thm}
\label{thm:same_crash_z}
The following program has the same optimal value as \eqref{eq:crash_traj}:
\begin{subequations}
    \label{eq:crash_traj_z}
    \begin{align}
    Q^*_z = & \inf_{t, \ x_0, \ z, \ w}  \ z    \label{eq:crash_traj_z_obj}\\
    & \dot{x}(t') =  f(t', x(t'), w(t')) & 
    & \forall t' \in [0, T]\\
    & \dot{z}(t') = 0 \label{eq:crash_traj_z_const}& 
    & \forall t' \in [0, T] \\
    & J(w(t')) \leq z & &\forall t' \in [0, T]  \label{eq:crash_traj_z_supp}\\
    & x(t \mid x_0, w(\cdot)) \in X_u \\ 
    & w(\cdot) \in W, \ t \in [0, T] \\
    & x_0 \in X_0,  z \in [0, J_{\max}].
    \end{align}
\end{subequations}
\end{thm}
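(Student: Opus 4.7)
The plan is to establish equality of optimal values via two inequalities, each shown by direct construction of feasible points. This is essentially the epigraph reformulation: the constant state $z$ with $\dot{z}=0$ together with the pointwise constraint $J(w(t'))\leq z$ replaces the peak-cost $\sup_{t'} J(w(t'))$ with a Mayer-style constant cost $z$.

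For the direction $Q^*_z \geq Q^*$, I would take any feasible tuple $(t,x_0,z,w)$ for \eqref{eq:crash_traj_z} and show that $(t,x_0,w)$ is feasible for \eqref{eq:crash_traj} with cost at most $z$. The relation $\dot{z}=0$ forces $z$ to be constant on $[0,T]$, so constraint \eqref{eq:crash_traj_z_supp} yields $\sup_{t'\in[0,t]} J(w(t')) \leq \sup_{t'\in[0,T]} J(w(t')) \leq z$. All remaining constraints of \eqref{eq:crash_traj} are already present in \eqref{eq:crash_traj_z}, so infimizing over admissible $z$ gives $Q^* \leq Q^*_z$.

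For the reverse direction $Q^* \geq Q^*_z$, I would start from a feasible tuple $(t,x_0,w)$ of \eqref{eq:crash_traj} with cost $J^\star := \sup_{t'\in[0,t]} J(w(t'))$ and construct a corresponding feasible tuple for \eqref{eq:crash_traj_z} with objective $z = J^\star$. Setting $z(\cdot)\equiv J^\star$ handles \eqref{eq:crash_traj_z_const} trivially; the input process must then be extended from $[0,t]$ to all of $[0,T]$ while preserving $w(t')\in W$ and $J(w(t'))\leq z$. The constant extension $w(t')=w(t)$ for $t'\in(t,T]$ suffices, since $J(w(t))\leq J^\star=z$ by construction, and the Lipschitz regularity in assumption A3 guarantees the extended trajectory is well-defined on $[0,T]$. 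Compactness (A1) together with $J\geq 0$ and $J^\star\leq \sup_{w\in W}J(w)\leq J_{\max}$ ensures $z\in[0,J_{\max}]$, and the terminal condition $x(t\mid x_0,w)\in X_u$ holds at the same stopping time $t$ regardless of what $w$ does afterward.

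The only real subtlety will be verifying this extension argument: one must confirm that no constraint in \eqref{eq:crash_traj_z} implicitly demands that the trajectory $x(t')$ stay inside $X$ (or avoid $X_u$) on the full interval $[0,T]$, rather than only up to the stopping time $t$. Reading \eqref{eq:crash_traj_z} strictly, the only terminal state requirement is at time $t$, so the extension is harmless. Combining the two inequalities then delivers $Q^*_z = Q^*$.
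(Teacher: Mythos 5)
Your proof is correct and follows the same epigraph-reformulation idea as the paper, whose own proof is only a three-sentence sketch (the constant state $z$ upper-bounds $J(w)$ along trajectories, and the objective infimizes that upper bound). Your two-inequality construction fills in details the paper leaves implicit---notably the constant extension of $w$ past the stopping time so that \eqref{eq:crash_traj_z_supp} holds on all of $[0,T]$, and the observation that the terminal constraint only depends on $w$ restricted to $[0,t]$---with the one loose end that $\sup_{w \in W} J(w) \leq J_{\max}$ is an implicit modeling assumption about the cap $J_{\max}$ rather than a consequence of A1, an ambiguity inherited from the paper itself.
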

\begin{proof}

\rw{The parameter $z$ is added as a constant state \eqref{eq:crash_traj_z_const}. This parameter upper-bounds the values of $J(w)$ along trajectories \eqref{eq:crash_traj_z_supp}. The objective \eqref{eq:crash_traj_z_obj} therefore infimizes this upper-bound of $J(w)$, forming an equivalence with  \eqref{eq:crash_traj}.}

\end{proof}

\subsection{Crash-Safety Linear Program}

\rw{This subsection will present infinite-dimensional \ac{LP} relaxation to \eqref{eq:crash_traj_z}, and will prove conditions for which the \ac{LP} relaxations give the same objective value as the original problems in trajectories. First,} the following compact support  sets involving the input $w$ and peak-bound $z$ \rw{must be defined}:
\begin{align}
\label{eq:crash_support}
    Z &= [0, J_{\max}] & \Omega &= \{(w, z) \in W \times Z: J(w) \leq z\}.
\end{align}

\rw{The \ac{LP} will be posed in terms of an auxiliary function $v(t,x, z) \in C^1$. \rw{Consistency of values of $v$ along trajectories of the dynamical system will be enforced through a Lie derivative constraint. This}  Lie derivative $\Lie_f$ } will be defined as
\begin{align}
\label{eq:lie}
\Lie_f v(t,x,z,w) = (\partial_t + f(t, x, w)\cdot \nabla_x) v(t, x, z)
\end{align}

\rw{Our presented} \ac{LP} formulation of the crash-safety \ac{OCP} in \eqref{eq:crash_traj}:
\begin{subequations}
\label{eq:crash_cont}
\begin{align}
    q^* = & \sup_{\gamma \in \R, \ v} \ \gamma \label{eq:crash_cont_obj}& & \\
    & \forall (x, z) \in X_0 \times Z: \nonumber \\ 
    & \qquad v(0, x, z) \geq \gamma \ & & \label{eq:crash_cont_0} \\
    & \forall (t, x, z) \in  [0, T] \times X_u \times Z \nonumber \\
    & \qquad v(t, x, z)  \leq z \ & &  \label{eq:crash_cont_p} \\
    & \forall (t, x, z, w)\in [0, T] \times X \times \Omega \nonumber \\
    & \qquad \Lie_{f} v(t, x, z, w) \geq 0 & & \label{eq:crash_cont_lie}\\
    & v(t,x,z) \in C^1([0, T]\times X \times Z). \label{eq:crash_cont_v}& &
\end{align}
\end{subequations} 

\begin{thm}
\label{thm:same_crash_meas}
    Under assumptions A1-A\rw{4}, programs \eqref{eq:crash_traj} and \eqref{eq:crash_cont} will have equal objectives $q^* = Q^*$.
\end{thm}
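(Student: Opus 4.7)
Since Theorem \ref{thm:same_crash_z} establishes $Q^* = Q^*_z$, it suffices to prove $q^* = Q^*_z$. The plan is to follow the classical occupation-measure framework of \cite{lewis1980relaxation, henrion2008nonlinear}: first verify weak duality by a direct trajectory argument, then introduce a primal measure \ac{LP} dual to \eqref{eq:crash_cont}, and finally invoke the equivalence of the measure \ac{LP} with the trajectory \ac{OCP} under A1--A4.

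For the weak-duality direction $q^* \leq Q^*_z$, fix a feasible tuple $(t, x_0, z, w(\cdot))$ for \eqref{eq:crash_traj_z} and a feasible pair $(\gamma, v)$ for \eqref{eq:crash_cont}. The support constraint \eqref{eq:crash_traj_z_supp} ensures $(w(t'), z) \in \Omega$ for a.e.\ $t' \in [0, t]$, so along the trajectory $x(\cdot)$ generated by $w(\cdot)$ from $x_0$, the chain rule combined with \eqref{eq:crash_cont_lie} yields $\tfrac{d}{dt'} v(t', x(t'), z) = \Lie_f v(t', x(t'), z, w(t')) \geq 0$. Integrating from $0$ to $t$ and combining with \eqref{eq:crash_cont_0} and \eqref{eq:crash_cont_p} gives $\gamma \leq v(0, x_0, z) \leq v(t, x(t), z) \leq z$. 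Taking the infimum over $(t, x_0, z, w(\cdot))$ and the supremum over $(\gamma, v)$ delivers $q^* \leq Q^*_z$.

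For the reverse direction, I would introduce a measure \ac{LP} with an initial measure $\mu_0 \in \Mp{X_0 \times Z}$ of unit mass, a stopping/peak measure $\mu_p \in \Mp{[0, T] \times X_u \times Z}$, and an occupation measure $\mu \in \Mp{[0, T] \times X \times \Omega}$, with objective $\inp{z}{\mu_p}$ subject to the Liouville identity $\inp{v(t, x, z)}{\mu_p} - \inp{v(0, x, z)}{\mu_0} = \inp{\Lie_f v}{\mu}$ for all $v \in C^1([0,T] \times X \times Z)$. Program \eqref{eq:crash_cont} is precisely the Lagrangian dual of this measure \ac{LP}, and compactness of all support sets via A1 together with the Riesz--Markov representation of bounded linear functionals on $C$ yields no duality gap between the two infinite-dimensional \acp{LP}.

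The decisive step is identifying the optimal value of the measure \ac{LP} with $Q^*_z$. One direction is constructive: any feasible trajectory of \eqref{eq:crash_traj_z} produces feasible measures $\mu_0 = \delta_{(x_0, z)}$, $\mu_p = \delta_{(t, x(t), z)}$, and $\mu$ the occupation measure of $(t', x(t'), w(t'), z)$, with matching objective $z$. The converse, that every feasible measure triple can be realized by an admissible trajectory, is the main obstacle and is the content of Vinter-type equivalence theorems \cite{vinter1978equivalence,lewis1980relaxation}. The assumptions of the theorem are tailored to this reconstruction: A2 closes the differential inclusion $\dot x \in f(t, x, W)$ under weak-* limits of measures so that no relaxation gap appears, A3 provides Lipschitz regularity so that a selected input $w(\cdot)$ yields a unique trajectory, A1 supplies the compactness needed for tightness and weak-* closure, and A4 prevents the reconstructed trajectory from leaving and re-entering $X$, legitimizing the stopping time extracted from $\mu_p$. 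Stringing together weak duality, \ac{LP} duality between \eqref{eq:crash_cont} and the measure \ac{LP}, and the measure-\ac{LP}/trajectory-\ac{OCP} equivalence yields $q^* = Q^*_z = Q^*$.
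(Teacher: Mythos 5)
Your proposal is correct and follows essentially the same route as the paper: both reduce to showing $q^* = Q^*_z$ via Theorem \ref{thm:same_crash_z} and delegate the core trajectory--\ac{LP} equivalence under A1--A4 to the relaxation theory of \cite{vinter1978equivalence, lewis1980relaxation} (the paper cites Theorem 2.1 of \cite{lewis1980relaxation} directly, noting Section 6.3 for the state-dependent constraint $(w,z) \in \Omega$). The additional scaffolding you supply---the explicit weak-duality integration argument, the measure \ac{LP}, and the trajectory-to-measure construction---is exactly what the paper develops separately in \eqref{eq:crash_meas}, Lemma \ref{lem:feas_meas}, and Theorem \ref{thm:crash_duality}.
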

\begin{proof}
    Program \eqref{eq:crash_traj_z} with optimum $Q^*_z$ is a standard-form \ac{OCP} with free terminal time and zero running cost. Under assumptions A1-A5, Theorem 2.1 of \cite{lewis1980relaxation} proves that $Q^*_z = q^*$. Section 6.3 of \cite{lewis1980relaxation} specifically discusses state-dependent controls (e.g. $(w, z) \in \Omega$). Theorem \ref{thm:same_crash_z} provides that $Q^* = Q^*_z$, which together implies that $Q^* = q^*$.
\end{proof}

The dual \ac{LP} of \eqref{eq:crash_cont} may be phrased in terms of an initial measure $\mu_0$, terminal measure $\mu_u$, and relaxed occupation measure $\mu$:
\begin{subequations}
\label{eq:crash_meas}
\begin{align}
m^* = & \  \inf_{\mu_0, \mu_p, \mu} \quad 
\inp{z}{\mu_u}
 \label{eq:crash_meas_obj} \\
    & \inp{v(t, x, z)}{\mu_u} = \inp{v(0, x, z)}{\mu_0} + \inp{\Lie_f v(t, x, z, w)}{\mu} \label{eq:crash_meas_flow}\\
    & \inp{1}{\mu_0} =  1 \label{eq:crash_meas_prob}\\
    & \mu_0 \in \Mp{X \times Z} \label{eq:crash_meas_init}\\
    & \mu_u \in \Mp{[0, T] \times X_u \times Z} \label{eq:crash_meas_term}\\
    & \mu \in \Mp{[0, T] \times X \times  \Omega }. \label{eq:crash_meas_occ}
\end{align}
\end{subequations}
Constraint \eqref{eq:crash_meas_obj} is a Liouville equation involving the Young measure $\mu$ \cite{young1942generalized}.

\begin{lem}
\label{lem:feas_meas}
There exists a feasible solution to \eqref{eq:crash_meas_flow}-\eqref{eq:crash_meas_occ} under A1-A4.
\end{lem}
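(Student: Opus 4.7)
The plan is to exhibit an explicit feasible triple $(\mu_0, \mu_u, \mu)$ built from a single admissible trajectory of the Mayer-form problem \eqref{eq:crash_traj_z}. First I would fix any admissible tuple $(t^*, x_0, w^*(\cdot))$ with $x_0 \in X_0$, $w^*(t)\in W$ on $[0,T]$, and $\tilde x(t^*) := x(t^*\mid x_0, w^*) \in X_u$; the existence of at least one such trajectory is the tacit admissibility requirement that makes the measure LP nontrivial (if no such trajectory exists, the measure program is vacuously infeasible together with \eqref{eq:crash_traj_z}, and the statement is to be read modulo this). Since $J$ is continuous on the compact set $W$ by A1, the scalar $z^* := \sup_{t\in[0,t^*]} J(w^*(t))$ is well-defined and (assuming $J_{\max}$ bounds $J$ on $W$) lies in $Z$; it will serve as the constant lifted state.

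The candidate measures would then be the Dirac-and-occupation trio
\begin{align*}
\mu_0 = \delta_{(x_0, z^*)}, \qquad \mu_u = \delta_{(t^*,\, \tilde x(t^*),\, z^*)}, \qquad \mu = \int_0^{t^*} \delta_{(t,\, \tilde x(t),\, z^*,\, w^*(t))}\, dt.
\end{align*}
The mass condition \eqref{eq:crash_meas_prob} is immediate from the Dirac form of $\mu_0$, and the support conditions for $\mu_0$ and $\mu_u$ reduce to $x_0\in X_0$, $\tilde x(t^*)\in X_u$, and $z^*\in Z$, which are built into the construction. For the Liouville identity \eqref{eq:crash_meas_flow}, I would apply the fundamental theorem of calculus to $t\mapsto v(t,\tilde x(t), z^*)$ for an arbitrary test function $v\in C^1$; the chain rule gives $\tfrac{d}{dt} v(t, \tilde x(t), z^*) = \Lie_f v(t, \tilde x(t), z^*, w^*(t))$, and integrating over $[0, t^*]$ yields $\inp{v}{\mu_u} - \inp{v(0,\cdot,\cdot)}{\mu_0} = \inp{\Lie_f v}{\mu}$, as required.

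The main obstacle is verifying the support constraint \eqref{eq:crash_meas_occ} for the occupation measure $\mu$, which requires both $\tilde x(t)\in X$ for every $t\in[0,t^*]$ and $(w^*(t), z^*)\in \Omega$. The latter holds automatically from the definition of $z^*$ and the description of $\Omega$ in \eqref{eq:crash_support}. The former is precisely where A4 does the work: since $\tilde x(t^*) \in X_u \subseteq X$, any earlier crossing of $\partial X$ at some $t\in[0, t^*)$ would, by non-return, force $\tilde x(t^*) \notin X$, a contradiction. A secondary technical issue is that $w^*$ is a priori only Lebesgue-measurable; however, A3 together with the convexity hypothesis A2 secures a unique Carath\'eodory solution $\tilde x$, so the integrals defining $\mu$ and the Liouville identity remain legitimate Lebesgue integrals against measurable integrands, and no further regularity is needed.
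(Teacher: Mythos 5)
Your proposal is correct and follows essentially the same route as the paper's proof: both construct Dirac measures $\mu_0$, $\mu_u$ and an occupation measure $\mu$ from a single admissible trajectory reaching $X_u$, with a constant lifted state $z^*$ dominating $J(w(\cdot))$. The only difference is that you spell out the verifications (Liouville identity via the chain rule, and the role of A4 in keeping the arc inside $X$) that the paper leaves implicit.
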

\begin{proof}
    Let $t^* \in [0, T]$ be a stopping time, $x_0 \in X_0$ be an initial condition, and $w(\cdot) \in \mathcal{W}$ be an input such that $x(t^* \mid x_0, w(\cdot)) \in X_u$. Let $z^*$ be a feasible solution to $\forall t \in [0, t^*]: (z^*, w(t)) \in \Omega$. Then the probability measures can be set to  $\mu_0 = \delta_{x=x_0, z=z^*}$ and $\mu_u = \delta_{t = t^*, x=x(t^* \mid x_0, w(\cdot)), \ z=z^*}$, and $\mu$ can be assigned to the occupation measure of $t \mapsto (t, x(t^* \mid x_0, w(\cdot)), w(t))$ in the times $[0, t^*]$.
\end{proof}

\begin{rmk}
The process of \ref{lem:feas_meas} to generate a feasible measure solution may be used when only A1 and A4 are active, thus certifying that $m^* \leq Q^*$.
\end{rmk}

\begin{thm}
\label{thm:crash_duality}
    Strong duality occurs with $q^*=d^*$ between \eqref{eq:crash_meas} and \eqref{eq:crash_cont} under assumptions A1-A4.
\end{thm}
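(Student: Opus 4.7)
My plan is to sandwich $q^*$ and $m^*$ between known quantities rather than invoking abstract infinite-dimensional LP duality theory. The paper has already established that $q^* = Q^*$ (Theorem \ref{thm:same_crash_meas}) and that the construction in Lemma \ref{lem:feas_meas}, combined with the remark, yields $m^* \leq Q^*$ under A1 and A4. So the only missing ingredient is weak duality $q^* \leq m^*$, after which chaining gives $q^* \leq m^* \leq Q^* = q^*$ and hence equality throughout.

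To prove weak duality, I would fix an arbitrary pair $(\gamma, v)$ feasible for \eqref{eq:crash_cont} and an arbitrary triple $(\mu_0, \mu_u, \mu)$ feasible for \eqref{eq:crash_meas}, and chain the four LP constraints. Since $\mu_0$ is a probability measure by \eqref{eq:crash_meas_prob}, the initial inequality \eqref{eq:crash_cont_0} gives
\begin{equation*}
\gamma = \gamma \cdot \inp{1}{\mu_0} \leq \inp{v(0,x,z)}{\mu_0}.
\end{equation*}
Applying the Liouville identity \eqref{eq:crash_meas_flow} followed by the Lie inequality \eqref{eq:crash_cont_lie} (valid since $\mu$ is supported in $[0,T]\times X \times \Omega$), I obtain
\begin{equation*}
\inp{v(0,x,z)}{\mu_0} = \inp{v(t,x,z)}{\mu_u} - \inp{\Lie_f v}{\mu} \leq \inp{v(t,x,z)}{\mu_u}.
\end{equation*}
Finally, because $\mu_u$ is supported in $[0,T]\times X_u \times Z$, the terminal inequality \eqref{eq:crash_cont_p} gives $\inp{v(t,x,z)}{\mu_u} \leq \inp{z}{\mu_u}$. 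Supremizing the left side and infimizing the right yields $q^* \leq m^*$.

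Concatenating with the remark after Lemma \ref{lem:feas_meas} and Theorem \ref{thm:same_crash_meas} produces $q^* \leq m^* \leq Q^* = q^*$, which forces $q^* = m^*$, completing the proof of strong duality.

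The main obstacle is ensuring that every pairing in the weak-duality chain is well-defined and that the Liouville identity is applied against an admissible test function; this is guaranteed because A1 makes all supports compact, $v \in C^1$ is bounded with bounded Lie derivative on the support of $\mu$, and the Lipschitz hypothesis A3 is already baked into the validity of \eqref{eq:crash_meas_flow}. No minimax or Hahn--Banach separation step is needed, which is what makes the three-way sandwich cleaner than a direct duality argument.
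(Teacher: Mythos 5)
Your proposal is correct, but it takes a genuinely different route from the paper's. The paper disposes of this theorem in one line, by appealing to the standard infinite-dimensional LP duality machinery of \cite{lewis1980relaxation, vinter1985dynamic, lasserre2008nonlinear} (Hahn--Banach/weak-* compactness arguments), with Lemma \ref{lem:feas_meas} supplying the feasibility hypothesis those arguments need. You instead prove only the elementary half---weak duality $q^* \leq m^*$, obtained by pairing a feasible $(\gamma, v)$ against a feasible $(\mu_0,\mu_u,\mu)$ and chaining \eqref{eq:crash_cont_0}, \eqref{eq:crash_meas_flow}, \eqref{eq:crash_cont_lie}, and \eqref{eq:crash_cont_p}---and then close the gap with the sandwich $q^* \leq m^* \leq Q^* = q^*$, importing $m^* \leq Q^*$ from the remark after Lemma \ref{lem:feas_meas} and $Q^* = q^*$ from Theorem \ref{thm:same_crash_meas}. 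This is logically sound and non-circular within the paper's architecture, since Theorem \ref{thm:same_crash_meas} is established by citation to \cite{lewis1980relaxation} rather than through the measure LP \eqref{eq:crash_meas}. What your route buys is self-containedness: no functional-analytic separation or closedness arguments are needed. What it costs is generality: your equality $q^* = m^*$ is contingent on the relaxation equivalence $Q^* = q^*$ and therefore inherits its hypotheses (in particular the convexity assumption A2), whereas the duality arguments the paper cites establish $q^* = m^*$ as an intrinsic property of the LP pair---valid independently of whether either LP equals the trajectory optimum $Q^*$---and typically also yield attainment of the infimum in \eqref{eq:crash_meas}, which your sandwich does not provide.

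One step you should make explicit: the first link of your chain, $\gamma \inp{1}{\mu_0} \leq \inp{v(0,x,z)}{\mu_0}$, is valid only if $\supp{\mu_0} \subseteq X_0 \times Z$, since \eqref{eq:crash_cont_0} constrains $v(0,\cdot,\cdot)$ only on $X_0 \times Z$. As printed, \eqref{eq:crash_meas_init} reads $\mu_0 \in \Mp{X \times Z}$; this is evidently a typo for $\Mp{X_0 \times Z}$ (the construction in Lemma \ref{lem:feas_meas} places $\mu_0 = \delta_{x = x_0, z = z^*}$ with $x_0 \in X_0$, and the dual variable of \eqref{eq:crash_cont_0} must be supported where that constraint is indexed), but under the literal printed support your inequality would fail, so your write-up should note and use the corrected support set.
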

\begin{proof}
    This holds by standard \ac{OCP} \ac{LP} duality arguments from \cite{ lewis1980relaxation, vinter1985dynamic, lasserre2008nonlinear}, in which feasibility of a measure solution is demonstrated in Lemma \ref{lem:feas_meas}.
\end{proof}
\section{Data-Driven Crash-Safety Analysis}
\label{sec:crash_robust}

This section motivates crash-safety in the context of data-driven analysis. \rw{The cost function $J$ will be interpreted as a measure of alignment with a model class, and the assumption that  $J(0)=0$ will be removed in this section to allow for modeling errors}.

\subsection{Data-Driven Overview}
In this section, we will assume that $N_s$ time-state-derivative data  records $\mathcal{D} = \{(t_k, x_k, y_k)\}_{k=1}^{N_s}$ are provided for the true system $\dot{x} = F(t, x)$. \rw{The record $y_k$ is a noisy observation of the derivative (value of $F$) at time $t_k$ and state $x_k$}. The data records in $\dc$ are corrupted by $L_\infty$-bounded \rw{uncertainty} of intensity $\epsilon$ with
\begin{align}  \label{eq:linf_corrupt}
    \forall k = 1..N_s & & \norm{y_k - F(t_k, x_k)}_\infty \leq \epsilon.
\end{align}

We are given a dictionary of functions $(f_0, \{f_\ell\}_{\ell=1}^L)$ that are Lipschitz in $[0, T] \times X$ (e.g. monomials). \rw{This dictionary of functions will be used to describe the ground-truth (and a-priori unknown) dynamics $F(t, x)$. }
We are also given the knowledge that there exists at least one ground-truth choice of parameters $w^* \in \R^L$ with 
\begin{align}
   \textstyle  F(t, x) = f_0(t, x) + \sum_{\ell=1}^L w^*_\ell f_\ell(t, x). \label{eq:dynamics_affine_true}
\end{align}
\rw{The affine parameters $w$ in the data-driven case will be treated as an uncertainty $w(\cdot)$ from the prior section. Specifically, the data $\mathcal{D}$ will be used to describe a set of parameters $w$ that are consistent with the data up to corruption level $z$.}


In the $L_\infty$-bounded polytopic framework, the crash-safety problem \eqref{eq:crash_traj_z} finds an infimal upper bound on the data corruption needed to crash into the unsafe set:
\begin{subequations}
    \label{eq:crash_base}
    \begin{align}
    Z^* = & \inf_{t, \, x_0, \, z, \, w} z\\
    & \forall t' \in [0, T]: \nonumber \\
    & \qquad \dot{x}(t') =  f_0(t', x) + \textstyle \sum_{\ell=1}^L w_\ell f_\ell(t', x(t')) \\
    & \qquad \dot{z}(t')=0  \\
    & x_0 \in X_0, \ x(t \mid x_0, w) \in X_u \\ 
    &  \forall k = 1..N_s:  \\
    & \qquad z \geq \norm{f_0(t_k, x_k) + \textstyle \sum_{\ell=1}^L w_\ell f_\ell(t_k, x_k) - y_k}_\infty \nonumber& &  \label{eq:crash_base_poly}\\
    & z \in Z, \ w \in \R^L, \ t \in [0, T].
    \end{align}
\end{subequations}
If the returned value of \eqref{eq:crash_base} is $Z^* = 0$, then there exists some choice of model parameters $w$ that exactly fit the data $\mathcal{D}$ by \eqref{eq:dynamics_affine_true}. Additionally, this choice $w$ renders at least one trajectory $x(\cdot)$ starting from $X_0$ is unsafe (crashes into $X_0$). Values of $Z^*$ greater than 0 are a certificate of safety in the model structure. A larger value of $Z^*$ indicates that the data must be increasingly corrupted in order to render any trajectory unsafe. Safety is certified if $Z^* > \epsilon$, though we note that the true value of $\epsilon$ may be a-priori unknown.

\subsection{Robust Data-Driven Program}

We will use the input-affine structure of dynamics and polytopic form of  \eqref{eq:crash_base_poly} to form an \ac{LP} that eliminates the \rw{parameter} $w$. This elimination leads to increasingly tractable \ac{SOS} \acp{SDP}.
For each $k=1..N_s$, define the data-record matrices $\Gamma_k, \ h_k$ by
\begin{subequations}
\begin{align}
    \Gamma_k &= \begin{bmatrix}f_1(t_k, x_k), \cdots, f_L(t_k, x_k) \end{bmatrix}  \label{eq:gamma_matrix}\\
    h_k &= f_0(t_k, x_k) - y_k.
\end{align}
\end{subequations}
Letting $\Gamma$ and $h$ be the vertical concatenations of $\{\Gamma_k\}$ and $\{h_k\}$ respectively, we can define the $L_\infty$ performance function and support set as 
\begin{align}
J(w) &= \norm{\Gamma w - h}_\infty = \theta(t, x, w), \quad Z = [0, J_{\max}], \\
\intertext{and the support set for $(w, z)$ from \eqref{eq:crash_base_poly} as}
\label{eq:support_set_crash_data}
    \Omega &= \left\{(w, z) \in \R^L \times Z: \begin{matrix}\Gamma w \leq z\1 - h \\ -\Gamma w \leq z \1 + h\end{matrix}\right\}.
\end{align}

We will eliminate the $w$ variable from \eqref{eq:crash_cont_lie} by introducing new nonnegative multiplier functions $\{\zeta^+, \zeta^-_j\}_{j=1}^{2nT}$. This elimination proceeds using the infinite-dimensional robust counterpart method of \cite{miller2023robustcounterpart}, which requires that \eqref{eq:crash_cont_lie} hold strictly (with a $>0$) constraint.
\begin{thm}
\label{thm:crash_robust_lie}
A strict version of Lie constraint in \eqref{eq:crash_cont_lie} may be robustified  (will have the same feasibility/infeasibility conditions) into
\begin{subequations}
\label{eq:crash_robust_lie}
\begin{align}
& \forall (t, x, z) \in [0, T] \times X \times Z: \nonumber\\
    & \qquad \Lie_{f_0} v - (z\1-h)^T \zeta^+ - (z\1 + h)^T \zeta^- > 0 \label{eq:sub_lie_decomp}\\
    & \forall \ell=1..L: \quad \  (\Gamma^T)_\ell (\zeta^+ - \zeta^-) + f_\ell \cdot \nabla_x v = 0 & &   \\
    & \forall j=1..2nT: \ \zeta^+_j, \zeta^-_j \in C_+([0, T] \times X \times Z).
    \end{align}
    \end{subequations}
\end{thm}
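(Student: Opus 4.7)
The plan is to exploit the affinity of the dynamics $f(t, x, w) = f_0(t, x) + \sum_{\ell=1}^L w_\ell f_\ell(t, x)$ in the parameter $w$ to convert the universally-quantified strict Lie constraint into an existentially-quantified certificate via linear programming duality, following the infinite-dimensional robust counterpart construction of \cite{miller2023robustcounterpart}.

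First, I would expand the Lie derivative as
$$\Lie_f v(t, x, z, w) = \Lie_{f_0} v(t, x, z) + \sum_{\ell=1}^L w_\ell \bigl(f_\ell(t, x) \cdot \nabla_x v(t, x, z)\bigr),$$
which is affine in $w$ for each fixed $(t, x, z)$. The quantifier ``$\forall w$ with $(w, z) \in \Omega$'' in \eqref{eq:crash_cont_lie} then becomes the requirement that the minimum of a linear function in $w$ over the polytope $\{w : \Gamma w \leq z\1 - h,\ -\Gamma w \leq z\1 + h\}$ be strictly positive.

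Next, for each fixed $(t, x, z)$ I would apply finite-dimensional LP strong duality to this inner minimization. Introducing nonnegative multipliers $\zeta^+$ and $\zeta^-$ for the two families of halfspaces, the dual objective is $\Lie_{f_0} v - (z\1 - h)^T \zeta^+ - (z\1 + h)^T \zeta^-$ subject to $(\Gamma^T)_\ell(\zeta^+ - \zeta^-) + f_\ell \cdot \nabla_x v = 0$ for each $\ell = 1..L$. The direction ``robustified implies original'' then follows from weak duality alone: substituting the equality constraint into the expansion of $\Lie_f v$ and combining $\zeta^\pm \geq 0$ with the polytope inequalities yields $\Lie_f v \geq \Lie_{f_0} v - (z\1 - h)^T \zeta^+ - (z\1 + h)^T \zeta^- > 0$ for every $w \in \Omega$.

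For the reverse direction, pointwise strong LP duality produces optimal multipliers at each $(t, x, z)$ for which the polytopic slice is nonempty; strictness of the Lie constraint handles slices where it is empty or singular. I would then lift this pointwise correspondence to continuous functions $\zeta^\pm \in C_+([0, T] \times X \times Z)$ by invoking the infinite-dimensional robust counterpart machinery of \cite{miller2023robustcounterpart}. Two ingredients are essential here: the strict inequality in \eqref{eq:crash_cont_lie}, which endows the dual optimum with a uniform positive margin and rules out boundary degeneracies, and the compactness of $[0, T] \times X \times Z$ from A1, which enables a continuous selection of dual variables.

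I expect the principal obstacle to be precisely this continuous-selection step in the reverse direction: turning pointwise-optimal dual multipliers into jointly continuous nonnegative functions on all of $[0, T] \times X \times Z$, rather than merely measurable ones. This is exactly the step for which the full apparatus of \cite{miller2023robustcounterpart} is required, and it is the reason strictness must be imposed on the Lie constraint before robustification. The forward direction, by contrast, reduces to an algebraic weak-duality calculation that is routine.
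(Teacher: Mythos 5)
Your proposal is correct and takes essentially the same route as the paper: both exploit the affine dependence on $w$ to cast the strict Lie constraint in the parametric robust-counterpart form $b_0 + \sum_{\ell} w_\ell b_\ell > 0$ over the polytope $\{w : Aw + e(z) \in K\}$, and both defer the crucial reverse direction (continuous selection of the dual multipliers $\zeta^\pm$) to Theorems 4.2 and 4.3 of \cite{miller2023robustcounterpart}, with strictness and compactness playing exactly the roles you identify. The only difference is one of emphasis: the paper's proof consists of verifying the four hypotheses of those theorems (pointed convex cone, compact parameter set $[0,T]\times X \times Z$, constant constraint matrix $\Gamma$, and continuity of $e$, $b_0$, $b_\ell$ from $v \in C^1$ and the Lipschitz dictionary), whereas you additionally spell out the underlying pointwise LP weak-duality computation for the easy direction.
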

\begin{proof}
We define the following variables 
\begin{subequations}
\label{eq:crash_robust_assign}
\begin{align}
    b_0\rw{(t,x,z)} &= \Lie_{f_0} v\rw{(t,x,z)} & b_\ell\rw{(t,x,z)} &= f_\ell \cdot \nabla_x v\rw{(t,x,z)} & \forall \ell=1..L \\
    A&= [-\Gamma; \Gamma] & e\rw{(z)} &= [z-h; z+h]\\
     K &= \textstyle \prod_{s=1}^{2 n T}\R_{\geq 0}.
\end{align}
\end{subequations}
to express the strict version of \eqref{eq:crash_cont_lie} into the form
\begin{align}
         \textstyle b_0\rw{(t, x, z)} + \sum_{\ell=1}^L w_\ell b_\ell\rw{(t, x, z)} & > 0 & \forall A w + e\rw{(z)} \in K. \label{eq:lin_robust_strict}
\end{align}
The parameters of \eqref{eq:lin_robust_strict} are $(t, x, z) \in [0, T] \times X \times Z$.
By Theorems 4.2 and 4.3 of \cite{miller2023robustcounterpart}, sufficient conditions for \eqref{eq:crash_robust_lie} to equal the strict version of \eqref{eq:crash_cont_lie} are that:
\begin{itemize}
   \item[R1] $K$ is a convex pointed cone.
    \item[R2] $[0, T] \times X \times Z$ is compact.
    \item[R3] $A$ is constant in $(t, x, z)$.
    \item[R4] $(e, b_0, \{b_\ell\})$ are continuous in   $(t, x, z)$.
\end{itemize}

R1 holds because $\R_{\geq0}^{2nT}$ is a convex and pointed cone. Compactness of $[0, T] \times X$ holds by A1, and compactness of $Z$ holds by A2. R3 is true because $\Gamma$ is a constant matrix computed from the data in $\dc$
from \eqref{eq:gamma_matrix}. R4 is satisfied because $e$ is continuous (affine) in $z$, and $( b_0, \{b_\ell\})$ are continuous given that $v \in C^1$ \eqref{eq:crash_cont_v} and $(f_0, \{f_\ell\})$ are Lipschitz (A3). The theorem is proven because R1-R4 are all fullfilled.
\end{proof}

\begin{rmk}
Strictness in \eqref{eq:sub_lie_decomp} is required to ensure that $\zeta^\pm$ may be chosen to be continuous while not adding conservatism. A nonstrict inequality for \eqref{eq:sub_lie_decomp} may be developed  using possibly discontinuous multipliers $\zeta^\pm$.
\end{rmk}

\begin{rmk}
This paper discussed $L_\infty$-bounded uncertainty, resulting in polytopic decomposition of the Lie constraint by Theorem \ref{thm:crash_robust_lie}. Theorems 4.2 and 4.4 of \cite{miller2023robustcounterpart} may be applied when $\Omega$ is a more general semidefinite representable set parameterized by $z$, such as an intersection of ellipsoids for $L_2$-bounded \rw{uncertainty}, or a projection of spectahedra for semidefinite bounded \rw{uncertainty}. 
\end{rmk}

\section{SOS Programs }
\label{sec:crash_sos}

This section poses finite-dimensional \ac{SOS} tightenings to the infinite-dimensional crash-safety programs. 

We will require a strengthening of assumptions A1 \rw{ and A3 in order to use \ac{SOS} methods for crash-safety}:
\begin{itemize}
    \item[A5] The sets $(X, X_u, X_0, [0, T], Z, \Omega)$ are all Archimedean \ac{BSA} sets and the dynamics $f(t, x, w)$ are polynomial.
\end{itemize}

\subsection{Standard Crash-Safety}

For a given degree $d$, define $\tilde{d} = d + \floor{\deg f/2}$ as the dynamics degree of $f(t, x, w)$. The degree-$d$ \ac{SOS} tightening of program \eqref{eq:crash_cont} is
\begin{subequations}
\label{eq:crash_sos}
\begin{align}
    q^*_d = & \max_{\gamma \in \R, \ v} \ \gamma \label{eq:crash_sos_obj}& & \\
    & v(0, x, z) - \gamma  \in \Sigma[X_0 \times Z]_{\leq d}\label{eq:crash_sos_0} \\
    & z - v(t, x, z) \in \Sigma[[0, T] \times X_u \times Z]_{\leq d}\label{eq:crash_sos_p} \\
    & \Lie_{f} v(t, x, z, w) \in \Sigma_{\tilde{d}}[[0, T] \times X \times \Omega]\label{eq:crash_sos_lie}\\
    & v(t,x,z) \in \R [t, x, z]_{\leq 2d}.\label{eq:crash_sos_v}& &
\end{align}
\end{subequations} 

We need to prove boundeness of \eqref{eq:crash_meas} in order to prove convergence of \eqref{eq:crash_sos}.

\begin{lem}
\label{lem:crash_meas_bounded}
All measures $(\mu_0, \mu_u, \mu)$ in \eqref{eq:crash_meas} are bounded under A1-A5.
\end{lem}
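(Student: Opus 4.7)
The plan is to exploit the Liouville equation \eqref{eq:crash_meas_flow} by testing it against a few carefully chosen polynomial test functions $v$, which is the standard trick for bounding moment-LP measures in peak/occupation-measure formulations.

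First, the mass of $\mu_0$ is immediate: constraint \eqref{eq:crash_meas_prob} pins $\inp{1}{\mu_0} = 1$, so $\mu_0$ is a probability measure on the compact set $X_0 \times Z$ and is therefore bounded. Next, I would substitute the constant test function $v(t,x,z) \equiv 1$ into \eqref{eq:crash_meas_flow}. Since $\Lie_f 1 = 0$, this collapses to $\inp{1}{\mu_u} = \inp{1}{\mu_0} = 1$, showing that $\mu_u$ is also a probability measure, hence bounded.

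Finally, for the occupation measure $\mu$, I would substitute $v(t,x,z) = t$. Then $\Lie_f v = \partial_t v + f \cdot \nabla_x v = 1$, so \eqref{eq:crash_meas_flow} gives
\begin{equation*}
\inp{1}{\mu} = \inp{t}{\mu_u} - \inp{0}{\mu_0} = \inp{t}{\mu_u}.
\end{equation*}
Because $\mu_u$ is supported on $[0,T] \times X_u \times Z$ with $T < \infty$ by A1 (and A5 preserves compactness), the right-hand side is at most $T \cdot \inp{1}{\mu_u} = T$, so $\mu$ has total mass bounded by $T$.

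The argument is essentially free once the test functions $v \equiv 1$ and $v = t$ are selected; the only ingredient needed from A1--A5 is that the sets carrying each measure are compact (so that the bounded functions $1$ and $t$ are integrable and admissible test functions in $C^1$), and that $T$ is finite. There is no real obstacle: the Liouville constraint is a linear equality valid for all admissible $v$, and picking these two test functions directly yields finite total masses for all three measures.
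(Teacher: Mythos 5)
Your proof is correct and takes essentially the same route as the paper's: both fix $\inp{1}{\mu_0}=1$ via \eqref{eq:crash_meas_prob}, then test the Liouville equation \eqref{eq:crash_meas_flow} with $v\equiv 1$ to get $\inp{1}{\mu_u}=1$ and with $v=t$ to get $\inp{1}{\mu}=\inp{t}{\mu_u}\leq T$, invoking compactness of the supports (A1) to conclude boundedness. No gaps.
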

\begin{proof}
We will use the sufficient condition that a measure is bounded if it has finite mass and its support set is compact.
All support sets are compact by assumption A1.
    The measure $\mu_0$ has mass 1 by \eqref{eq:crash_meas_prob}. Substitution of $v(t, x, z) = 1$ into \eqref{eq:crash_meas_flow} results in $\inp{1}{\mu_u} = \inp{1}{\mu_0} = 1$, and applying $v(t,x, z)=t$ yields $\inp{1}{\mu} = \inp{t}{\mu_u} \leq T$.
\end{proof}

\begin{thm}
Under assumptions A1-A5, then the sequence of bounds $q^*_d$ will converge as $\lim_{d\rightarrow \infty} q^*_d = Q^*$ to the optimum of \eqref{eq:crash_traj}. 
\end{thm}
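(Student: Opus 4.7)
The plan is to chain together the equivalences already established with a standard moment-SOS convergence argument, following the template of \cite{henrion2008nonlinear, lasserre2009moments}. First, I would invoke Theorem \ref{thm:same_crash_meas} to identify $Q^* = q^*$, and Theorem \ref{thm:crash_duality} to identify $q^* = m^*$, so that all three quantities coincide. Next, observe the trivial bound $q^*_d \leq q^*$ for every $d$: every $v \in \R[t,x,z]_{\leq 2d}$ with the SOS certificates \eqref{eq:crash_sos_0}--\eqref{eq:crash_sos_lie} is also a $C^1$ function satisfying the sign conditions \eqref{eq:crash_cont_0}--\eqref{eq:crash_cont_lie}, so \eqref{eq:crash_sos} is a restriction of \eqref{eq:crash_cont}. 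Likewise, the sequence $\{q^*_d\}$ is monotone nondecreasing in $d$ because higher-degree WSOS cones contain lower-degree ones. It therefore suffices to prove $\lim_{d\to\infty} q^*_d \geq q^*$.

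The standard way to obtain the matching lower bound is to pass to the dual moment hierarchy. Let $m^*_d$ denote the degree-$d$ moment relaxation of \eqref{eq:crash_meas}, in which each measure $\mu_0,\mu_u,\mu$ is replaced by a truncated pseudo-moment sequence whose localizing matrices on the Archimedean \ac{BSA} constraint sets are \ac{PSD}. By weak \ac{SDP} duality $q^*_d \leq m^*_d$, and Lemma \ref{lem:crash_meas_bounded} guarantees that $m^*_d \leq m^*$, since every feasible measure triple of \eqref{eq:crash_meas} has bounded mass and compact support and therefore yields a feasible pseudo-moment sequence at every level. The goal is then to show $m^*_d \to m^*$.

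Convergence of $m^*_d$ to $m^*$ is the heart of the argument and the place where assumption A5 does the work. Given any pseudo-moment sequence feasible at level $d$, Putinar's Positivstellensatz combined with the Archimedean hypothesis of A5 implies that any accumulation point (taken in a suitable weak-$\ast$ sense after extracting the measure via the solution of the truncated moment problem on each Archimedean set) is a genuine triple of measures feasible for \eqref{eq:crash_meas}. The mass bounds obtained in Lemma \ref{lem:crash_meas_bounded} (namely $\inp{1}{\mu_0}=\inp{1}{\mu_u}=1$ and $\inp{1}{\mu} \leq T$) are exactly what guarantees tightness and hence the existence of a weak-$\ast$ accumulation point on the compact supports of A1/A5. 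Lower semicontinuity of the objective $\inp{z}{\mu_u}$ under weak-$\ast$ convergence on compacts then yields $\liminf_d m^*_d \geq m^*$, so combined with $m^*_d \leq m^*$ we obtain $m^*_d \to m^*$.

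Finally, one needs $q^*_d = m^*_d$ asymptotically to transfer the convergence back to the SOS side. Under A5 the interior of the \ac{WSOS} cones is nonempty (one can, for instance, perturb by a small multiple of the Archimedean ball certificate to produce a strictly feasible $v$), so Slater's condition holds for the \ac{SDP} \eqref{eq:crash_sos} at all sufficiently large $d$, and hence strong \ac{SDP} duality gives $q^*_d = m^*_d$ for those $d$. Combining the chain $Q^* = q^* = m^* = \lim_d m^*_d = \lim_d q^*_d$ finishes the proof. The main obstacle I would expect is the verification of the \ac{SDP} strong-duality / Slater step for the tightening \eqref{eq:crash_sos}; if Slater fails for some $d$, the cleanest workaround is to bypass the dual moment problem entirely and instead build a near-optimal polynomial $v$ directly by Stone--Weierstrass approximation of an $\varepsilon$-optimal continuous $v^\ast$ from \eqref{eq:crash_cont}, perturbing to make the inequalities \eqref{eq:crash_cont_0}--\eqref{eq:crash_cont_lie} strict and then invoking Putinar on the Archimedean sets of A5 to produce the required \ac{WSOS} certificates.
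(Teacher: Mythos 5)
Your proposal has the same overall architecture as the paper's proof, which chains Theorem \ref{thm:same_crash_meas} ($Q^*=q^*$), Theorem \ref{thm:crash_duality} ($q^*=m^*$), and Lemma \ref{lem:crash_meas_bounded}, and then delegates the two remaining technical steps to citations: convergence of the moment relaxations $m^*_d \to m^*$ is obtained from Corollary 8 of \cite{tacchi2022convergence}, and the absence of a duality gap $q^*_d = m^*_d$ at each finite truncation from the arguments of Theorem 4 of \cite{henrion2013convex}. Your bookkeeping ($q^*_d \leq m^*_d \leq m^*$, monotonicity in $d$) and your tightness/weak-$*$ extraction argument for $m^*_d \to m^*$ are correct and essentially reproduce the internals of the first citation, with the mass bounds of Lemma \ref{lem:crash_meas_bounded} playing exactly the role you assign them.

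The genuine gap is your Slater step. Strict feasibility of the \ac{SOS}-side \ac{SDP} \eqref{eq:crash_sos} means there exist $(\gamma, v)$ and multipliers such that every Gram matrix can be taken positive definite while all coefficient-matching equalities hold; this does not follow from Archimedeanness or from nonemptiness of the interior of the \ac{WSOS} cones, because the three certified polynomials are coupled through the single decision variable $v$ (the image of $v \mapsto \bigl(v(0,x,z)-\gamma,\ z - v,\ \Lie_f v\bigr)$ is a thin affine subspace of the product space), and ``perturbing by a small multiple of the ball certificate'' changes the polynomial being certified rather than producing a positive-definite representation of the same one. This is precisely why the argument the paper cites closes the finite-level gap from the \emph{moment} side instead: the truncated feasible set of \eqref{eq:crash_meas} is nonempty (Lemma \ref{lem:feas_meas}) and bounded (Lemma \ref{lem:crash_meas_bounded} combined with the Archimedean ball constraints, which bound all pseudo-moments by the masses), and \iac{SDP} with nonempty bounded feasible set exhibits no duality gap. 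Your fallback, by contrast, is sound and self-sufficient: perturb an $\epsilon$-optimal $C^1$ pair for \eqref{eq:crash_cont} (e.g.\ $v \mapsto v + \epsilon t - 2\epsilon T$ and $\gamma \mapsto \gamma - 3\epsilon T$ makes \eqref{eq:crash_cont_0}--\eqref{eq:crash_cont_lie} hold with uniform positive margin), approximate $v$ in $C^1$ norm by a polynomial on the compact sets of A1/A5 (this controls $\Lie_f v$ since $f$ is polynomial, hence bounded there), and invoke Putinar \cite{putinar1993compact} to obtain \ac{WSOS} certificates at some finite degree; this yields $\lim_{d} q^*_d \geq q^*$ directly, bypassing the moment hierarchy and the finite-level duality question entirely. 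If you write this up, promote the fallback to the main argument, or replace the Slater claim with the bounded-feasible-set duality result used in \cite{henrion2013convex}.
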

\begin{proof}
This convergence will occur by Corollary 8 of \cite{tacchi2022convergence}, along with convergence in Theorem \ref{thm:same_crash_meas}, boundedness of measures in \ref{lem:crash_meas_bounded}, the infinite-dimensional strong duality Theorem \ref{thm:crash_duality}, and strong duality between their finite-dimensional \ac{SDP} truncations  \cite[Arguments from Theorem 4]{henrion2013convex}.
\end{proof}

\subsection{Robust Crash-Safety}

We now apply the robust counterpart from \eqref{eq:crash_robust_lie} to \eqref{eq:crash_cont} in order to form \iac{SOS} program for the $L_\infty$ data-driven scenario. Define $\tilde{d} = d + \max_{\ell\in0..L}{\floor{\deg f_\ell/2}}$ as the dynamics degree of \eqref{eq:dynamics_affine_true}. The $L_\infty$-bounded data-driven robust crash-safety \ac{SOS} tightening at degree $d$ is
\begin{subequations}
\label{eq:crash_sos_robust}
\begin{align}
    \tilde{q}^*_d = & \max_{\gamma \in \R, \ v} \ \gamma \label{eq:crash_sos_robust_obj}& & \\
    & v(0, x, z) - \gamma  \in \Sigma_{d}[X_0 \times Z]\label{eq:crash_sos_robust_0} \\
    & z - v(t, x, z) \in \Sigma_d[[0, T] \times X_u \times Z] \label{eq:crash_sos_robust_p} \\
    & \Lie_{f_0} v - (z \1 - h)^T\zeta^+ - (z \1 + h)^T \zeta^-  \nonumber \\
    & \qquad \qquad \in \Sigma_{\tilde{d}}[[0, T] \times X \times Z]\label{eq:crash_sos_robust_lie}\\
    & \forall \ell=1..\ell:\\
    & \qquad \textrm{coeff}_{txz}((\Gamma^+)_\ell(\zeta^+-\zeta^-) + f_\ell \cdot \nabla_x v)=0 & &  \nonumber\\
    & v(t,x,z) \in \R [t, x, z]_{\leq 2d}.\label{eq:crash_sos_robust_v}& & \\
    & \forall j=1..2nT: \label{eq:crash_sos_robust_zeta}\ \\
    & \qquad \zeta^+_j, \zeta^-_j \in \Sigma[[0, T] \times X \times Z]_{\leq \tilde{d}-1}. & & \nonumber
\end{align}
\end{subequations}

\begin{thm}
    Under assumptions A1-A5 and assuming $L_\infty$ \rw{uncertainty} structure, the sequence of optimal values from \eqref{eq:crash_sos_robust} will converge as $\lim_{d\rightarrow\infty} \tilde{q}_d^* =Q^*$.
\end{thm}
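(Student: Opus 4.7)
The plan is to reduce this result to the convergence statement already established for the non-robust SOS program \eqref{eq:crash_sos}, using Theorem \ref{thm:crash_robust_lie} to bridge the robust and non-robust infinite-dimensional LPs, and then invoking the standard moment--SOS machinery on the Archimedean BSA sets supplied by A5.

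The first step is to identify the infinite-dimensional analogue of \eqref{eq:crash_sos_robust}---call it the robust continuous LP---obtained by replacing $\Sigma_d[\cdot]$ by the nonnegativity cones on the corresponding sets, letting $v \in C^1$, and taking $\zeta^\pm_j \in C_+$. Theorem \ref{thm:crash_robust_lie} equates feasibility of a strict version of \eqref{eq:crash_cont_lie} with feasibility of \eqref{eq:crash_robust_lie}, so I must first show that the supremum of \eqref{eq:crash_cont} is unchanged when $\Lie_f v \geq 0$ is replaced by $\Lie_f v > 0$. A slack-perturbation argument---for instance replacing $v$ by $v - \varepsilon(T - t)$ and $\gamma$ by $\gamma - \varepsilon T$, which converts $\Lie_f v \geq 0$ into $\Lie_f v + \varepsilon > 0$ while preserving the initial and unsafe inequalities---suffices. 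Chaining this with Theorem \ref{thm:same_crash_meas} and Theorem \ref{thm:crash_duality} then gives that the robust continuous LP attains the value $Q^*$.

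The second step is to relate this continuous LP to its degree-$d$ truncation \eqref{eq:crash_sos_robust}. Since all support sets are Archimedean BSA by A5, Putinar's Positivstellensatz furnishes, for any strictly feasible triple $(v, \zeta^+, \zeta^-)$ in the continuous LP, polynomial approximants whose WSOS certificates have degree bounded by some $d(\varepsilon)$. Boundedness of the dual moment program---needed to invoke Corollary 8 of \cite{tacchi2022convergence}---carries over from Lemma \ref{lem:crash_meas_bounded} without modification, since the mass arguments depend only on A1--A5 and not on the specialized $L_\infty$ structure of $\Omega$. Strong duality between \eqref{eq:crash_sos_robust} and its moment dual at each level $d$ follows as in \cite[Theorem 4]{henrion2013convex}, and combining these facts delivers $\lim_{d\to\infty}\tilde q_d^* = Q^*$.

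The main technical obstacle I anticipate is ensuring that the affine equality constraint $\textrm{coeff}_{txz}((\Gamma^T)_\ell(\zeta^+ - \zeta^-) + f_\ell \cdot \nabla_x v) = 0$ can be enforced exactly by polynomial approximants at some finite degree, rather than only approximately. After restriction of $(v, \zeta^\pm)$ to polynomials of bounded degree this becomes finitely many linear conditions on the coefficients, and the strict Lie slack $\varepsilon$ introduced in the first step must be large enough to absorb the residual error coming from the projection of the continuous approximants onto this affine subspace. Once this coefficient-matching step is controlled, the remainder of the argument is a routine transcription of the non-robust convergence proof, with the extra SOS multipliers $\zeta^\pm$ handled exactly as the standard Putinar multipliers $\sigma_i$.
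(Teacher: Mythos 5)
Your skeleton parallels the paper's own proof, which is a short citation chain: Theorem \ref{thm:crash_robust_lie} for the robustification, Theorem 4.4 of \cite{miller2023robustcounterpart} for convergence of \eqref{eq:crash_sos_robust} to a strict version of \eqref{eq:crash_cont} under the polynomial restriction, and Proposition 5 of \cite{jones2021polynomial} for the claim that strictness costs nothing. Your explicit perturbation $v \mapsto v - \varepsilon(T-t)$, $\gamma \mapsto \gamma - \varepsilon T$ is a correct, self-contained substitute for that last citation (one small repair: \eqref{eq:crash_cont_p} retains no slack at $t=T$ under this perturbation, so you should also subtract a small constant from $v$ to create uniform room for later perturbations), and the weak direction $\tilde{q}_d^* \leq Q^*$ is fine. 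The genuine gap is in the step you defer: the coefficient-matching problem is not a residual detail, it is exactly the content the paper outsources to Theorem 4.4 of \cite{miller2023robustcounterpart}, and your proposed mechanism for it fails as stated. Projecting polynomial approximants of $(v,\zeta^+,\zeta^-)$ onto the affine subspace $(\Gamma^T)_\ell(\zeta^+-\zeta^-)+f_\ell\cdot\nabla_x v=0$ produces perturbations whose effect on the inequality \eqref{eq:sub_lie_decomp} can indeed be absorbed by the strict Lie slack; but that slack cannot repair the separate constraint $\zeta^\pm_j \geq 0$, which is what allows $\zeta^\pm_j$ to admit WSOS certificates at all. The continuous multipliers supplied by Theorem \ref{thm:crash_robust_lie} may touch zero, so arbitrarily small projection errors can push them negative, and no Lie slack fixes that. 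For the same reason your closing remark that $\zeta^\pm$ are ``handled exactly as the standard Putinar multipliers $\sigma_i$'' is wrong: Putinar multipliers certify positivity of a fixed polynomial, whereas here the multipliers are decision variables coupled to $v$ by an exact linear identity --- that coupling is the whole difficulty.

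The patch requires two observations missing from your sketch. First, shift both multipliers by the same constant, $\zeta^\pm \mapsto \zeta^\pm + \eta\1$: this leaves the difference $\zeta^+-\zeta^-$, and hence every equality constraint, unchanged, while creating a uniform positivity margin $\eta$; its cost in \eqref{eq:sub_lie_decomp} is $\eta\left[(z\1-h)^T\1+(z\1+h)^T\1\right] = 2\eta z\,\1^T\1$, uniformly small and absorbable by the strict slack. Second, solving the corrected system $(\Gamma^T)_\ell(\hat\zeta^+-\hat\zeta^-) = -f_\ell\cdot\nabla_x\hat{v}$ for an \emph{arbitrary} polynomial approximant $\hat{v}$ requires $\Gamma^T$ to be surjective, i.e.\ $\ker\Gamma = \{0\}$; this is not automatic in your argument but follows from A1/A5, since any nonzero $u\in\ker\Gamma$ would make $\Omega$ invariant under $w \mapsto w + tu$ and hence unbounded, contradicting compactness. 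With these two facts the pseudoinverse correction is a well-defined, uniformly small polynomial that can be split between $\hat\zeta^+$ and $\hat\zeta^-$ without leaving the positivity margin, after which Putinar furnishes the WSOS certificates at some finite degree. Finally, your claim that Lemma \ref{lem:crash_meas_bounded} and Corollary 8 of \cite{tacchi2022convergence} apply ``without modification'' is unjustified: the moment dual of \eqref{eq:crash_sos_robust} is not \eqref{eq:crash_meas} but a different measure program (the $w$-marginal is eliminated and signed duals to the equality constraints appear, whose boundedness needs its own argument). That branch, however, becomes redundant once the primal construction above is completed.
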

\begin{proof}
    The Lie constraint may be robustified by Theorem \ref{thm:crash_robust_lie}. The \ac{SOS} program in \eqref{eq:crash_sos_robust} will converge to a strict version of \eqref{eq:crash_cont} by Theorem 4.4 of \cite{miller2023robustcounterpart} under the polynomial $v$ restriction. Strictness is not overly restrictive when performing smooth approximations, as shown in the proof of Proposition 5 in \cite{jones2021polynomial}.
\end{proof}

\begin{rmk}
The degree of $\zeta^\pm$ in \eqref{eq:crash_sos_robust_zeta} is set  to $2(\tilde{d}-1)$ so as to ensure that $\deg z\1^T \zeta^\pm = 2\tilde{d}-1 \leq 2\tilde{d}$ in \eqref{eq:crash_sos_robust_lie}.
\end{rmk}

\subsection{Computational Complexity}

\rw{The computational complexity of the \ac{SOS} Programs \eqref{eq:crash_sos} and \eqref{eq:crash_sos_robust} will be compared in terms of the size of their largest \ac{PSD} Gram matrix. Specifically, an SOS constraint $p(x) \in \Sigma_{d}[X]$ with $x \in \R^n$ and $\deg{p} \leq 2d$ has a Gram matrix description of size $\binom{n+d}{d}$, and the per-iteration complexity of interior point methods moment-\ac{SOS} scales as $O(n^{6d})$ or $O(d^{4n})$ \cite{miller2022eiv_short}.}

Program \eqref{eq:crash_sos} has three \ac{WSOS} terms, together leading to Gram matrices of maximal size $\binom{n+1+d}{d},$ \ $\binom{n+2+d}{d}$, and  $\binom{n+L+2+\tilde{d}}{\tilde{d}}$ \cite{lasserre2009moments, miller2022eiv_short}. The performance of \acp{SDP} derived from \eqref{eq:crash_sos} is dominated by the largest size $\ \binom{n+L+2+\tilde{d}}{\tilde{d}}$ and scales as $(n+L+2)^{6 \tilde{d}}$ or $\tilde{d}^{4(n+L+2)}$.

The robustified program in \eqref{eq:crash_sos_robust} breaks up the Lie constraint's maximal-size Gram matrix dimension $\binom{n+L+2+\tilde{d}}{\tilde{d}}$ into one matrix of size $\binom{n+2+\tilde{d}}{\tilde{d}}$ \eqref{eq:crash_sos_robust_lie} and $2nT$ Gram matrices of size $\binom{n+1+\tilde{d}}{\tilde{d}}$ \eqref{eq:crash_sos_robust_zeta}.

\begin{rmk}
The nonredundant face identification method of \cite{caron1989degenerate} requires caution when attempting to reduce complexity of \eqref{eq:crash_sos_robust}. Faces of $W$ that are active at $z_1$ may no longer be active at $z_2 \geq z_1$ or vice versa \cite{loechner1997parameterized}. A bound on \eqref{eq:crash_sos_robust} computed using a subset of faces (constraints) in $\mathcal{D}$ will necessarily be lower than using all faces. This conservatism can be reduced while still eliminating faces by taking the union of active faces of the polytopes in $w$ from $Aw + e \in K$ in \eqref{eq:lin_robust_strict} at a set of values $z \in (0, J_{\max}]$.
\end{rmk}
\section{Subvalue Map}
\label{sec:crash_subvalue}
Program \eqref{eq:crash_cont} returns the worst-case crash safety over a set of initial conditions $X_0$.
We briefly discuss an extension of the crash-safety technique to assessing the safety of arbitrary initial conditions. In the data-driven framework, this could be interpreted as lower-bounding the minimum data corruption needed for a $\rw{\mathcal{D}}$-consistent system to crash when starting at any initial point.

\subsection{Value Functions}

We define the fixed-$z$ value function of \eqref{eq:crash_traj_z} (when starting at $X_0 = x'$) as
\begin{align}
\label{eq:value_v}
    V(x', z) = \begin{cases} z & z \in [0, J_{\max}], \ \exists t\in [0, T], w(\cdot) \in \mathcal{W}:  \\
    & \qquad x(t \mid x_0, w(\cdot)) \in X_u, \ J(w(t')) \leq z\  \forall t'\in [0,t] \\
    \infty & \textrm{otherwise.}\end{cases}
\end{align}

The value function $V(x', z)$ is infinite if the control problem of steering a point from $x'$ to $X_u$ is infeasible within the performance budget $J(w)\leq z$.
The value function of \eqref{eq:crash_traj_z} when restricted to the single initial condition $x'$ is
\begin{align}
\label{eq:value_q}
    Q(x') = \inf_{z \in [0, J_{\max}]} V(x', z).
\end{align}

The value function $Q(x')$ will have an upper bound of $J_{\max}$ if $Q(x')$ is finite, and otherwise will have a value of $\infty$. We make no assumptions of continuity or boundedness of $Q(x')$, beyond A1's assurance that $J_{\max}$ is finite.

\subsection{Subvalue Approximations}

We now use the moment-\ac{SOS} hierarchy to develop subvalue maps to lower-bound $Q(x')$ from \eqref{eq:value_q}.

\begin{prop}
\label{prop:v_subvalue}
Any function $v(t, x, z)$ that satisfies \eqref{eq:crash_cont_p} and \eqref{eq:crash_cont_lie} obeys $v(0, x, z) \leq V(x', z)$ from \eqref{eq:value_v} at all $(x, z) \in X \times Z$.
\end{prop}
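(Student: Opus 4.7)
The plan is to prove the bound pointwise in $(x', z) \in X \times Z$ by a direct trajectory-tracking argument on $v$ along admissible trajectories of the control system.

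First I would dispose of the trivial case: if $V(x', z) = \infty$, then the inequality $v(0, x', z) \leq V(x', z)$ holds vacuously. So assume $V(x', z) = z$, which by \eqref{eq:value_v} means there exist a stopping time $t^* \in [0, T]$ and an admissible input $w(\cdot) \in \mathcal{W}$ such that the trajectory $x(\cdot)$ starting at $x(0) = x'$ satisfies $x(t^*) \in X_u$ and $J(w(t')) \leq z$ for every $t' \in [0, t^*]$. In particular, the pair $(w(t'), z)$ lies in the support set $\Omega$ defined in \eqref{eq:crash_support} for every $t' \in [0, t^*]$.

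Next I would track $v$ along this trajectory. Since $v \in C^1$ and $z$ is constant in time, the chain rule gives
\begin{equation*}
\frac{d}{dt'} v(t', x(t'), z) = \Lie_f v(t', x(t'), z, w(t')),
\end{equation*}
where $\Lie_f$ is the Lie derivative in \eqref{eq:lie}. By the Lie constraint \eqref{eq:crash_cont_lie}, applied at each $t' \in [0, t^*]$ with $(w(t'), z) \in \Omega$ and $x(t') \in X$, this derivative is nonnegative. Integrating from $0$ to $t^*$ yields
\begin{equation*}
v(t^*, x(t^*), z) - v(0, x', z) = \int_0^{t^*} \Lie_f v(t', x(t'), z, w(t'))\, dt' \geq 0.
\end{equation*}

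Finally I would invoke the terminal constraint \eqref{eq:crash_cont_p}. Since $(t^*, x(t^*), z) \in [0, T] \times X_u \times Z$, constraint \eqref{eq:crash_cont_p} gives $v(t^*, x(t^*), z) \leq z$. Chaining this with the previous inequality yields $v(0, x', z) \leq v(t^*, x(t^*), z) \leq z = V(x', z)$, which is the desired bound. There is no real obstacle here; the only subtlety is noting that $\mathcal{W}$ admits measurable inputs, so $t' \mapsto \Lie_f v(t', x(t'), z, w(t'))$ is measurable and bounded on $[0, t^*]$ (using continuity of $\nabla_x v$, $\partial_t v$, and Lipschitzness of $f$ via A3), which makes the Newton--Leibniz step rigorous for the absolutely continuous trajectory $x(\cdot)$.
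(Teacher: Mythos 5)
Your proof is correct, but it takes a genuinely different (and more self-contained) route than the paper. The paper's own proof contains no trajectory argument at all: it observes that \eqref{eq:crash_cont_p} and \eqref{eq:crash_cont_lie} are inequality-relaxed versions of the Hamilton--Jacobi--Bellman equality conditions satisfied by the optimal value function, and then defers the subvalue relation to cited results (Section 4 of \cite{henrion2008nonlinear} and Proposition 1 of \cite{jones2021polynomial}). You instead unpack the underlying mechanism directly: dispose of the $V(x',z)=\infty$ case vacuously, pick a feasible trajectory realizing $V(x',z)=z$, note that $(w(t'),z)\in\Omega$ along it, integrate the nonnegative Lie derivative to obtain monotonicity of $t'\mapsto v(t',x(t'),z)$, and close with the terminal bound $v(t^*,x(t^*),z)\leq z$. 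What your approach buys is an elementary, citation-free proof that makes explicit why the constraints imply the bound (monotonicity plus terminal inequality), including the regularity bookkeeping (measurable inputs, absolute continuity) that the paper's citations absorb; what the paper's approach buys is brevity and a connection to the value-function approximation literature that it reuses later for the convergence results. One point you should make explicit: constraint \eqref{eq:crash_cont_lie} only applies at states in $X$, so you need the trajectory to remain in $X$ on all of $[0,t^*]$ before you may invoke it pointwise. This follows from the non-return assumption A4 together with $X_u\subseteq X$ and continuity of the trajectory: if the trajectory touched $\partial X$ at some time before $t^*$, A4 would forbid it from being in $X$ (hence in $X_u$) at time $t^*$. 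The paper leaves this implicit as well, but since you are writing the trajectory argument out in full, the step where you apply \eqref{eq:crash_cont_lie} along the trajectory is exactly where this must be stated.
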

\begin{proof}
Equations \eqref{eq:crash_cont_p} and \eqref{eq:crash_cont_lie} are inequality constrained versions of the \ac{HJB} equality constraints for an optimal value function $v^*$ \cite{liberzon2011calculus}:
\begin{subequations}
\begin{align}
    &v^*(t, x', z) = z       & & \forall (t, x', z) \in [0, T] \times X_u \times Z \\
    &\min_{w \mid (w, z) \in \Omega } \Lie_f v^*(t, x', z, w) = 0 & & \forall (t, x', z) \in [0, T] \times X \times Z.
\end{align}
\end{subequations}
Refer to the Section 4 of \cite{henrion2008nonlinear} and the proof of Proposition 1 of \cite{jones2021polynomial} for the establishment of subvalue relations.
\end{proof}

Let $\varphi \in \Mp{X}$ be a probability distribution with easily computable moments (e.g., uniform distribution over $X$ when $X$ is a ball or a box),  and $Q_{\max} \geq J_{\max}$ be a finite control cap. 
\begin{thm}
The following program provides a subvalue function $q(x) \leq Q(x)$:
\begin{subequations}
\label{eq:q_joint}
\begin{align}
    J^* &= \sup \int_X q(x) d\varphi(x) \label{eq:q_joint_obj}\\
    & q(x) \leq v(0, x, z) & & \forall (x, z) \in X \times [0, Z_{\max}] \label{eq:q_joint_qv} \\
    & q(x) \leq Q_{\max} & & \forall x \in \supp{\varphi} \label{eq:q_joint_cap} \\
    & z \geq v(t,x, z) & & \forall (t, x, z) \in [0, T] \times X_u \times Z  \label{eq:q_joint_z}\\
    & \Lie_{f} v(t, x, z, w) \geq 0 & & \forall (t, x, z, w) \in [0, T] \times X \times \Omega \label{eq:subvalue_lie_v} \\
    & v \in C^1([0, T] \times X \times Z) \label{eq:q_joint_v} \\
    & q \in C(X).
\end{align}
\end{subequations}
\end{thm}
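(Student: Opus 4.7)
The plan is to chain two pointwise inequalities. First, observe that constraints \eqref{eq:q_joint_z} and \eqref{eq:subvalue_lie_v} of program \eqref{eq:q_joint} are syntactically identical to the hypotheses \eqref{eq:crash_cont_p} and \eqref{eq:crash_cont_lie} of Proposition \ref{prop:v_subvalue}. Hence, for any feasible triple $(q, v)$, the auxiliary function $v$ satisfies $v(0, x, z) \leq V(x, z)$ at every $(x, z) \in X \times Z$, where $V$ is the fixed-$z$ value function from \eqref{eq:value_v}.

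Next, constraint \eqref{eq:q_joint_qv} directly forces $q(x) \leq v(0, x, z)$ for every $(x, z) \in X \times [0, Z_{\max}]$. Assuming $Z_{\max} = J_{\max}$ so that $[0, Z_{\max}] = Z$, composing with the first step yields $q(x) \leq V(x, z)$ uniformly in $z \in Z$. Taking the infimum over $z \in [0, J_{\max}]$ on the right-hand side and invoking the definition $Q(x) = \inf_{z \in [0, J_{\max}]} V(x, z)$ from \eqref{eq:value_q}, one obtains $q(x) \leq Q(x)$ at every $x \in X$ for which $Q(x)$ is finite.

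Finally, the plan accounts for the case $Q(x) = \infty$, which occurs at points from which no admissible input can drive the trajectory into $X_u$ within a cost budget of $J_{\max}$. Here the bound $q(x) \leq Q(x)$ is vacuously true, but the cap \eqref{eq:q_joint_cap} remains active on $\supp{\varphi}$ to enforce $q(x) \leq Q_{\max}$, which is what guarantees the $\varphi$-integrated objective \eqref{eq:q_joint_obj} is bounded and thus the program is well-posed.

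The main obstacle is not analytic but bookkeeping: one must check that passing to the infimum in the second step is legal without continuity of $V$ in $z$. Because the inequality $q(x) \leq V(x, z)$ holds at every $z \in [0, J_{\max}]$ simultaneously (the quantifier in \eqref{eq:q_joint_qv} is ``for all'' rather than ``there exists''), the infimum on the right can be taken trivially, and the possible non-attainment of the infimum defining $Q(x)$ poses no difficulty. No approximation of $V$ or $Q$ is required, and the only place where the cap $Q_{\max}$ enters is to rule out pathological unboundedness of the objective over regions of $X$ where $Q$ is infinite.
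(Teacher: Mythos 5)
Your proof is correct and follows essentially the same route as the paper: invoke Proposition \ref{prop:v_subvalue} to get $v(0,x,z) \leq V(x,z)$ from \eqref{eq:q_joint_z} and \eqref{eq:subvalue_lie_v}, chain this with constraint \eqref{eq:q_joint_qv} to obtain $q(x) \leq V(x,z)$ for all $z$, and infimize over $z$ via the definition \eqref{eq:value_q} of $Q$. Your added remarks on the $Q(x)=\infty$ case and on the role of the $Q_{\max}$ cap in bounding the objective are correct but belong to the paper's separate corollary and remarks rather than to this theorem's proof.
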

\begin{proof}
Proposition \ref{prop:v_subvalue} proves that $v(0, x, z) \leq V(x, z)$ from \eqref{eq:value_v}. Constraint  \eqref{eq:q_joint_qv} imposes that $q(x) \leq v(0, x, z) \leq V(x, z)$ for all $x \in X$, which implies that $q(x) \leq \inf_z v(0, x, z)$ for all $x \in X$. From the definition of $Q(x')$ in \eqref{eq:value_q} with $Q(x') \leq \inf_z V(x', z)$, it therefore holds that $q(x) \leq Q(x)$ for all $x \in X$.
\end{proof}

\begin{cor}
The objective $J^*$ from \eqref{eq:q_joint} is finite and is bounded above by $J^* \leq Q_{\max}$.
\end{cor}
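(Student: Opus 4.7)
The plan is to establish both bounds in the corollary separately: the upper bound $J^* \leq Q_{\max}$ follows directly by integrating the cap constraint \eqref{eq:q_joint_cap} against $\varphi$, and finiteness from below follows by exhibiting a trivial feasible solution.

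For the upper bound, I would integrate \eqref{eq:q_joint_cap} against the probability measure $\varphi$. Since $\varphi$ assigns all of its mass to $\supp{\varphi}$, where $q(x) \leq Q_{\max}$ holds pointwise, we obtain $\int_X q(x)\,d\varphi(x) = \int_{\supp{\varphi}} q(x)\,d\varphi(x) \leq Q_{\max} \int_{\supp{\varphi}} d\varphi(x) = Q_{\max}$, using that $\varphi$ is a probability distribution with total mass $1$. Taking the supremum over all feasible pairs $(q, v)$ then yields $J^* \leq Q_{\max} < \infty$.

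For $J^* > -\infty$, I would verify that the trivial pair $q \equiv 0$ and $v \equiv 0$ is feasible. Constraint \eqref{eq:q_joint_qv} becomes $0 \leq 0$; constraint \eqref{eq:q_joint_z} becomes $z \geq 0$, which holds since $Z = [0, J_{\max}]$; constraint \eqref{eq:subvalue_lie_v} becomes $\Lie_f 0 = 0 \geq 0$; and constraint \eqref{eq:q_joint_cap} becomes $0 \leq Q_{\max}$, which holds since $Q_{\max} \geq J_{\max} \geq 0$ by assumption. The objective evaluated at this feasible pair is $\int_X 0\, d\varphi = 0$, establishing $J^* \geq 0$, and in particular $J^*$ is finite.

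The proof has essentially no substantive obstacle; the corollary is immediate once the cap \eqref{eq:q_joint_cap} is integrated against a probability measure. The only minor subtlety worth noting is that \eqref{eq:q_joint_cap} is imposed pointwise only on $\supp{\varphi}$ rather than all of $X$, but this restriction is precisely what is needed for the integral estimate to go through, since $\varphi$ assigns no mass outside of its support.
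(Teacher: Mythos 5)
Your proof is correct and matches the paper's argument: the paper likewise obtains $J^* \leq Q_{\max}$ by integrating the cap constraint \eqref{eq:q_joint_cap} against the probability measure $\varphi$. Your additional verification that $(q,v) \equiv (0,0)$ is feasible (hence $J^* \geq 0 > -\infty$) is a small but valid supplement that the paper leaves implicit when asserting finiteness.
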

\begin{proof}
Constraint \eqref{eq:q_joint_cap} requires that $q(x)$ is upper-bounded by $Q_{\max}$. The objective \eqref{eq:q_joint_obj} is therefore upper-bounded by 
\begin{equation}
    \int_X q(x) d \varphi(x) \leq \int_X Q_{\max} d \varphi(x) \leq  Q_{\max} \int_{X} d \varphi(x) = Q_{\max},
\end{equation} given that $\varphi$ is a probability distribution.
\end{proof}

\begin{rmk}
\label{rmk:subvalue_exceed}
Let $v$ be a subvalue solution to \eqref{eq:q_joint_z}-\eqref{eq:q_joint_v}. Any point $x' \in X$ such that $\inf_{z \in Z} v(0, x', z) > J_{\max}$ implies that $Q(x') = \infty$.
\end{rmk}

\begin{rmk}
Without the $Q_{\max}$ cap on the value of $q$ in constraint \eqref{eq:q_joint_cap}, the optimal value of \eqref{eq:q_joint} could be $J^* = \infty$ if $\exists x' \in X: Q(x') = \infty$.
\end{rmk}

\begin{rmk}
    The Lie constraint in \eqref{eq:subvalue_lie_v} may be robustified through the methods in Section \ref{sec:crash_robust} when $f$ is input-affine and $W$ is a semidefinite representable set (e.g., a polytope from the $L_\infty$ data-driven case).
\end{rmk}

Define $q_d\in \R[x]_{\leq 2d}, v_d \in \R[t, x, z]_{\leq 2d}$ as the polynomials obtained by solving the degree-$d$ \ac{SOS} tightening of \eqref{eq:q_joint}. 
Let $I_u(x)$ be the indicator function
\begin{equation}
    I_u(x) \rw{=} \begin{cases}
    0 & x \in X_u \\
    -\infty & x \not \in X_u
    \end{cases}.
\end{equation}

For a sequence of orders $d' = 1..d$, a parametric function $q_{1:d}$ may be defined as
\begin{align}
\label{eq:parametric_subvalue}
    q_{1:d}(x) = \max(I_u(x), \max_{d' \in 1..d} q_{d'}(x)).
\end{align}

\begin{defn}[\cite{lasserre2010joint}] A sequence of continuous functions $\{q_k(x)\}$ converges \textbf{almost uniformly} to $Q(x)$ with respect to a measure $\varphi \in \Mp{X}$ if $\epsilon > 0: \exists  A \subseteq X$, such that $q_k \rightarrow Q$ uniformly on $X \setminus A$  and $\varphi(A) < \epsilon$.
\end{defn}

\begin{thm}
The function $q_{1:d}(x)$ will converge almost uniformly to \\ $\min(Q_{\max}, Q(x))$ on the state-space $\supp{\varphi} \in X$ as $d\rightarrow \infty$.
\end{thm}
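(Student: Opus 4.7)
The plan is to mirror the almost-uniform-convergence argument of Lasserre's joint+marginal approach \cite{lasserre2010joint}, adapted to the peak-minimizing subvalue setting. The three structural facts I will exploit are: (i) a pointwise subvalue bound $q_d(x) \leq \min(Q_{\max}, Q(x))$ for every $x \in X$ and every relaxation order $d$; (ii) monotonicity of $q_{1:d}(x)$ in $d$, which is built in by the running-maximum definition \eqref{eq:parametric_subvalue}; and (iii) convergence of the optimal values of the finite-dimensional \ac{SOS} tightenings of \eqref{eq:q_joint} to the infinite-dimensional optimum $J^{*}$. Fact (i) follows directly by composing the $q(x) \leq Q(x)$ inequality proved earlier from Proposition \ref{prop:v_subvalue} with the cap constraint \eqref{eq:q_joint_cap}; the $I_u$ adjustment in \eqref{eq:parametric_subvalue} handles the fact that $Q(x) = 0$ on $X_u$ (take $t=0$) while the polynomial $q_d$ cannot equal $-\infty$ off $X_u$.

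First, I would identify the optimum of the infinite-dimensional program as
\begin{equation*}
    J^{*} = \int_X \min(Q_{\max}, Q(x)) \, d\varphi(x).
\end{equation*}
For one direction, any feasible $q$ satisfies $q(x) \leq \min(Q_{\max}, Q(x))$ pointwise, giving $\int q \, d\varphi \leq J^{*}$. For the reverse direction, I would build a maximizing sequence by smoothing the genuine value function $V(x,z)$ (which is lower semicontinuous but possibly discontinuous) into $C^1$ subvalue approximants $v$, using the mollification construction in the proof of Proposition 5 of \cite{jones2021polynomial}; the corresponding $q(x) := \min(Q_{\max}, \inf_z v(0,x,z))$ then attains values arbitrarily close to $\min(Q_{\max}, Q(x))$ on $\supp{\varphi}$ in an $L^1(\varphi)$ sense.

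Second, I would upgrade infinite-dimensional optimality to \ac{SOS} convergence: $J^{*}_d \to J^{*}$ as $d \to \infty$. This is the standard Putinar-based moment-\ac{SOS} convergence argument, available under A5 (Archimedean \ac{BSA} sets and polynomial dynamics), and uses the same duality/boundedness ingredients as Lemma \ref{lem:crash_meas_bounded} together with the feasibility of the relaxations. Combining this with fact (i), the sequence $q_d$ satisfies $\int q_d \, d\varphi \to \int \min(Q_{\max}, Q) \, d\varphi$ while being dominated above by the limit; hence $q_d \to \min(Q_{\max}, Q)$ in $L^1(\varphi)$.

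Third, I pass from $L^1(\varphi)$ convergence of $q_d$ to almost-uniform convergence of the running maximum $q_{1:d}$. Because $q_{1:d}$ is monotone nondecreasing in $d$ and uniformly bounded above by $\min(Q_{\max}, Q)$, it converges $\varphi$-almost-everywhere to some limit $q_{\infty} \leq \min(Q_{\max}, Q)$; the $L^1$ convergence of $q_d$ and the sandwich $q_d \leq q_{1:d} \leq \min(Q_{\max}, Q)$ force $q_{\infty} = \min(Q_{\max}, Q)$ $\varphi$-a.e. Finally, Egorov's theorem applied to this a.e.\ monotone convergence on the compact set $\supp{\varphi}$ upgrades it to almost-uniform convergence in the sense of \cite{lasserre2010joint}. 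The main obstacle will be the first step, namely the construction of smooth subvalue approximants whose initial slices approach the possibly discontinuous capped value function $\min(Q_{\max}, V(\cdot, z))$ in $L^1(\varphi)$; once that density statement is in hand, the rest is a standard chaining of moment-\ac{SOS} convergence with Egorov's theorem.
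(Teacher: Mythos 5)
Your proposal is correct, and it rests on the same two external pillars as the paper's own proof, but it reorganizes the argument in a more self-contained way. The paper's proof is a two-citation chain: Corollary 2.5 of \cite{lasserre2010joint} is invoked as a black box to obtain $\varphi$-almost-uniform convergence of the running maximum $q_{1:d}$ to $\min(Q_{\max}, \min_z \tilde{v}(0,x,z))$ for a fixed polynomial subvalue function $\tilde{v}$, and Propositions 5--6 of \cite{jones2021polynomial} then let $\tilde{v}(0,\cdot,\cdot)$ approach the true value function $V$ in an $L^1$ sense wherever $V \leq Q_{\max}$. You instead unpack the Lasserre citation into its constituent elementary steps: (a) identify the infinite-dimensional optimum as $J^{*} = \int_X \min(Q_{\max}, Q(x))\, d\varphi(x)$, where the density direction is exactly the Jones--Peet mollification, so you share the paper's reliance on that result; (b) invoke standard Putinar/Archimedean convergence of the hierarchy values $J^{*}_d \to J^{*}$; (c) use the pointwise domination $q_d \leq q_{1:d} \leq \min(Q_{\max}, Q)$ on $\supp{\varphi}$ to turn value convergence into $L^1(\varphi)$ convergence, and then monotonicity of the running maximum plus Egorov's theorem to upgrade this to almost-uniform convergence --- which is essentially the internal content of Lasserre's Corollary 2.5 reproduced in this setting. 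What each route buys: the paper's version is shorter; yours makes explicit why the cap $Q_{\max}$ is indispensable (without it $J^{*}$ could be infinite and the $L^1$ sandwich collapses) and clarifies that the parametric joint-plus-marginal structure is already encoded in constraint \eqref{eq:q_joint_qv}, so no separate parametric-optimization machinery is needed. One small caveat: your parenthetical that $Q(x)=0$ on $X_u$ (``take $t=0$'') implicitly assumes $J(0)=0$, which Section \ref{sec:crash_robust} explicitly drops in the data-driven setting; however, your sandwich only needs $I_u(x) = 0 \leq Q(x) \leq Q_{\max}$ on $X_u$, which holds regardless since $Q(x') = \inf_{z} V(x', z) \geq 0$ by the construction $Z = [0, J_{\max}]$ in \eqref{eq:value_v}--\eqref{eq:value_q}, so the argument survives unchanged.
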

\begin{proof}
Let $\tilde{v} \in \R[t, x, z]$ be a polynomial subvalue function that obeys \eqref{eq:q_joint_z}-\eqref{eq:q_joint_v}. 
Corollary 2.5 of \cite{lasserre2010joint} proves that the parameterized program $q_{1:d}$ will converge $\varphi$-almost uniformly to $\min(Q_{\max}, \min_{z} \tilde{v}(0, x, z))$, resulting in 
\begin{align}
    \lim_{k \rightarrow \infty} \int_{X} \abs{Q(x) - q_k(x)}d\varphi(x) = 0. \label{eq:almost_uniform}
\end{align} 
Increasing the degree of sublevel polynomials $\tilde{v}$ allows for the choice of admissible $\tilde{v}$ such that $\tilde{v}(0, x, z)$ converges in an $L^1$-sense to $V(x, z)$ whenever $V(x, z) \leq Q_{\max}$ \cite[Propositions 5 and 6]{jones2021polynomial}, thus proving the theorem.
\end{proof}



%
\section{Examples}
\label{sec:crash_examples}


This section demonstrates the utility of the crash-safety framework. Robust decompositions of the Lie constraint are applied in all examples. MATLAB R2021a code to generate examples is available at \url{https://github.com/Jarmill/crash-safety}. All \ac{SDP} are generated using YALMIP \cite{lofberg2004yalmip} and solved using Mosek \cite{mosek92}. Finite-degree crash-bounds from \eqref{eq:crash_sos_robust} are compared against \ac{OCP} bounds found using the solver CasADi \cite{andersson2019casadi}. 

The examples in \ref{sec:crash_subvalue_demo} perform crash safety with respect to applied inputs $w$ when the ground truth system is known. The examples in 
\ref{sec:crash_subvalue_data} perform data-driven crash-safety analysis.

\subsection{Single-Input Subvalue Comparison }
\label{sec:crash_subvalue_demo}
This example demonstrates the computation of crash-bounds and the creation of crash-subvalue functionals for system     \eqref{eq:flow_w1} with $J_{\max}=1$ and $Q_{\max} = 4$. This subvalue is constructed by solving \ac{SOS} tightenings of \eqref{eq:q_joint} in the space $X = [-2, 2]^2$ and in the time horizon $t \in [0, 5]$ 

\subsubsection{Half-Circle}
The first part of this example involves the half-circle  respect to the unsafe set $X_u = \{x \mid (x_1+0.25)^2 + (x_2+0.7)^2 \leq 0.5^2, \ (0.95+x_1+x_2)/\sqrt{2 }\leq 0\}.$
Figure \ref{fig:crash_subvalue} draws the unsafe set $X_u$ in red. The color shading (colorbar) plots $q_{1:5}(x)$ clamped to the range $[0, J_{\max}]=[0, 1]$. The integral objective values of \ac{SOS} tightening \eqref{eq:q_joint} at degrees $1..5$ are $J^*_{1:5} = [1.934\times 10^{-7}, 4.864\times 10^{-7}, 3.0794, 5.992, 8.260]$.

\begin{figure}[!h]
    \centering
    \includegraphics[width=0.7\linewidth]{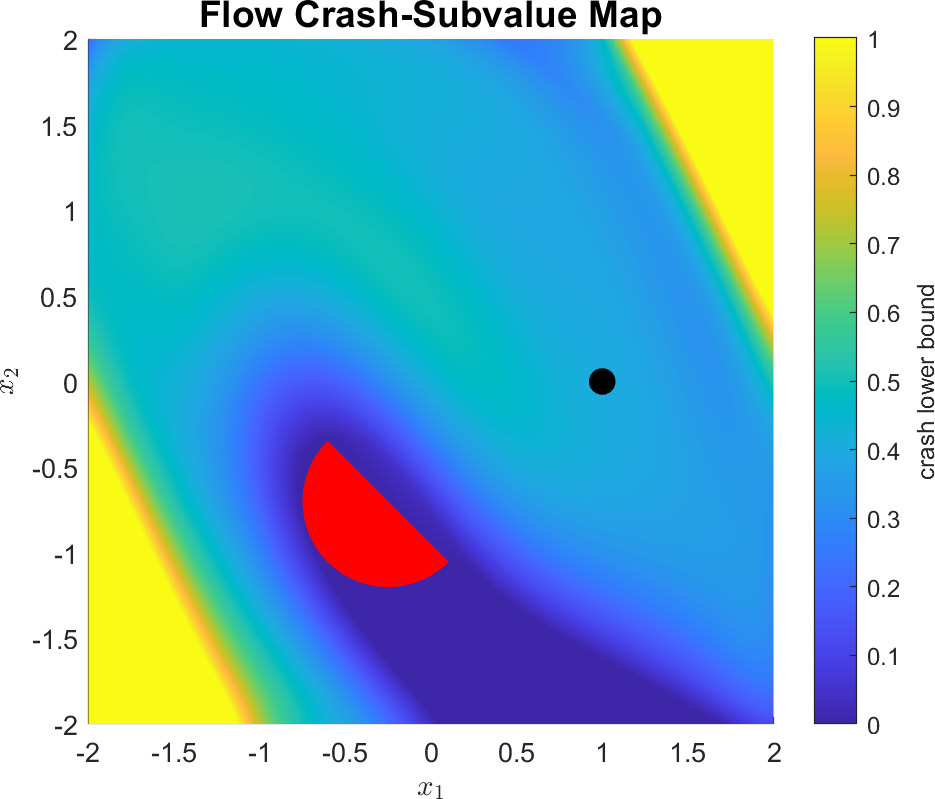}
    \caption{Subvalue function for Flow system \eqref{eq:flow_w1} between degrees $1..5$.}
    \label{fig:crash_subvalue}
\end{figure}
The black dot in Figure \ref{fig:crash_subvalue} is the specific initial point $X_0 = [1; 0]$. Table \ref{tab:crash_subvalue} lists crash-bounds on \eqref{eq:flow_w1} starting at $X_0$. The subvalue 
bound \eqref{eq:q_joint} is lower than the corresponding degree bounds at the $X_0$-specific program \eqref{eq:crash_cont}.

\begin{table}[h]
\centering
\caption{\label{tab:crash_subvalue} Crash-bounds at $X_0 = [1; 0]$ under \ac{SOS} tightenings}
\begin{tabular}{llllll}
order    & 1                     & 2      & 3      & 4   & 5  \\
subvalue \eqref{eq:q_joint}& $1.089\times10^{-9}$ & $1.607\times10^{-9}$  &  0.1473 & 0.3392 & 0.4053 \\
specific \eqref{eq:crash_cont} & $1.117\times10^{-7}$                & 0.1843 & 0.4369 & 0.5092 & 0.5118
\end{tabular}
\end{table}

We now consider worst-case crash-bounds for the half-circle set with respect to the perturbed flow system \eqref{eq:flow_w1} and the circular initial set $X_0 = \{x \mid 0.4^2 \geq (x_1-1)^2+x_2\}$. Crash-bounds as computed by \eqref{eq:crash_sos_robust} (\ac{SOS} tightenings to \eqref{eq:crash_cont}) in degrees $1..5$ are $[8.101\times 10^{-8}, 6.590\times 10^{-2}, 0.4054, 0.4631, 0.4638].$ The degree-5 lower-bound of $0.4638$ should be compared against the numerical bound of $0.4639$ produced by CasADi. The numerically solved trajectory (blue curve) is plotted in Figure \ref{fig:crash_circ_circ}, along with the unsafe set $X_u$ (red half-circle) and the initial set $X_0$ (black circle). The initial point of the controlled trajectory (blue dot) is $x_0 \approx [1.3424; 0.2069]$.

\begin{figure}[h]
    \centering
    \includegraphics[width=0.7\linewidth]{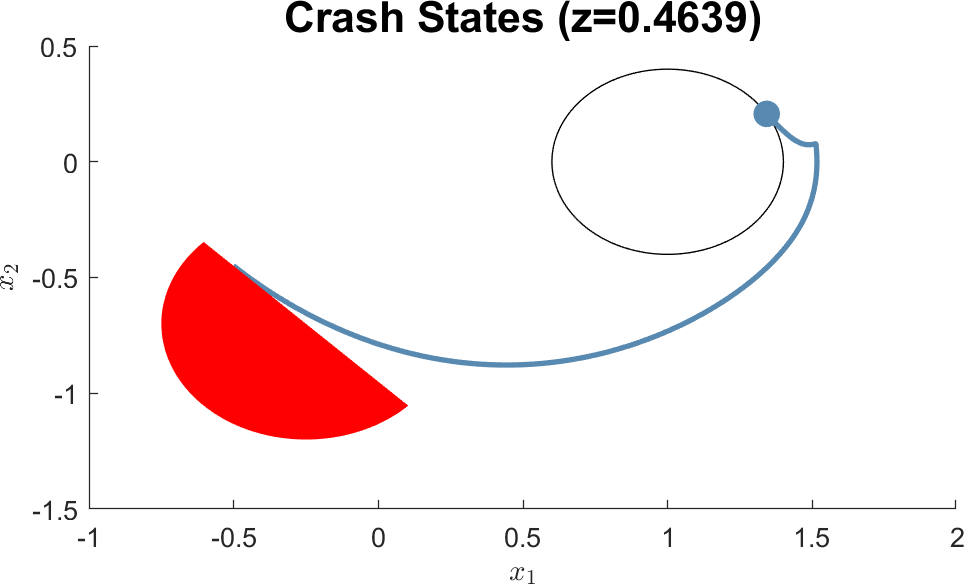}
    \caption{Numerical optimal control yields worst-case $Q^* \approx 0.4639$ for the half-circle $X_u$}
    \label{fig:crash_circ_circ}
\end{figure}

\subsubsection{Moon}
The second part of this example has a nonconvex moon-shaped unsafe set 
\begin{equation}
X_u = \{x \mid 0.8^2 - (x_1 - 0.4)^2 - (x_2 + 0.4)^2 \geq 0, \ (x_1 - 0.6596)^2 + (x_2 - 0.3989)^2-1.16^2 \geq 0\}. \label{eq:crash_moon}    
\end{equation}
Figure \ref{fig:casadi_moon} displays a controlled trajectory (blue curve)  starting from $X_0 = [0; 0]$ (black circle) and terminating in the $X_u$ (red moon), as computed by CasADi.

\begin{figure}[h]
    \centering
    \includegraphics[width=0.6\linewidth]{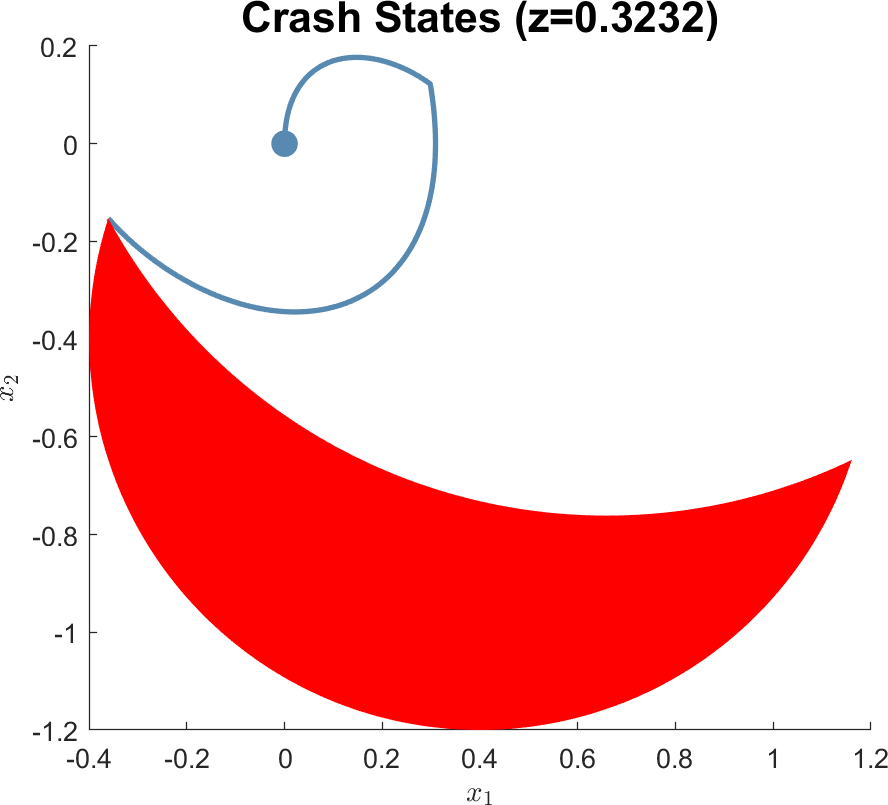}
    \caption{Numerical optimal control yields $Q^* \approx 0.3232$ for the moon $X_u$}
    \label{fig:casadi_moon}
\end{figure}

Table \ref{tab:crash_subvalue_moon} lists subvalue \eqref{eq:q_joint} and specific \eqref{eq:crash_cont} crash-bounds for $X_0 = [0; 0]$ between degrees $1..5$. The objectives of the \ac{SOS} tightenings to \eqref{eq:q_joint} are 
\[J^*_{1..5} = [1.973\times10^{-7}, 1.323\times10^{-7}, 1.027, 3.188, 4.502].\]

\begin{table}[h]
\centering
\caption{\label{tab:crash_subvalue_moon} Crash-bounds at $X_0 = [0; 0]$ for the moon \eqref{eq:crash_moon} under \ac{SOS} tightenings}
\begin{tabular}{llllll}
order                         & 2      & 3      & 4   & 5  \\
subvalue \eqref{eq:q_joint}   &$4.652\times10^{-10}$  &$-7.861\times10^{-2}$  &$-5.692\times10^{-3}$  &$7.721\times10^{-2}$ \\
specific \eqref{eq:crash_cont}     & 0.1010 & 0.2912 & 0.3216 & 0.3224
\end{tabular}
\end{table}
The data from Table \ref{tab:crash_subvalue_moon} at order 1 is subvalue: $8.770\times10^{-9}$, specific: $2.723\times10^{-8}$ (suppressed for layout/formatting purposes).

Figure \ref{fig:subvalue_moon} plots the subvalue function $q_{1:5}(x)$ from \eqref{eq:parametric_subvalue} under a cap of $Q_{\max} = 2$ (and $J_{\max} = 1$). All values of $q_{1:5}$ in Figure \ref{fig:subvalue_moon} are clamped to $[0, J_{\max}]$.

\begin{figure}[h]
    \centering
    \includegraphics[width=0.6\linewidth]{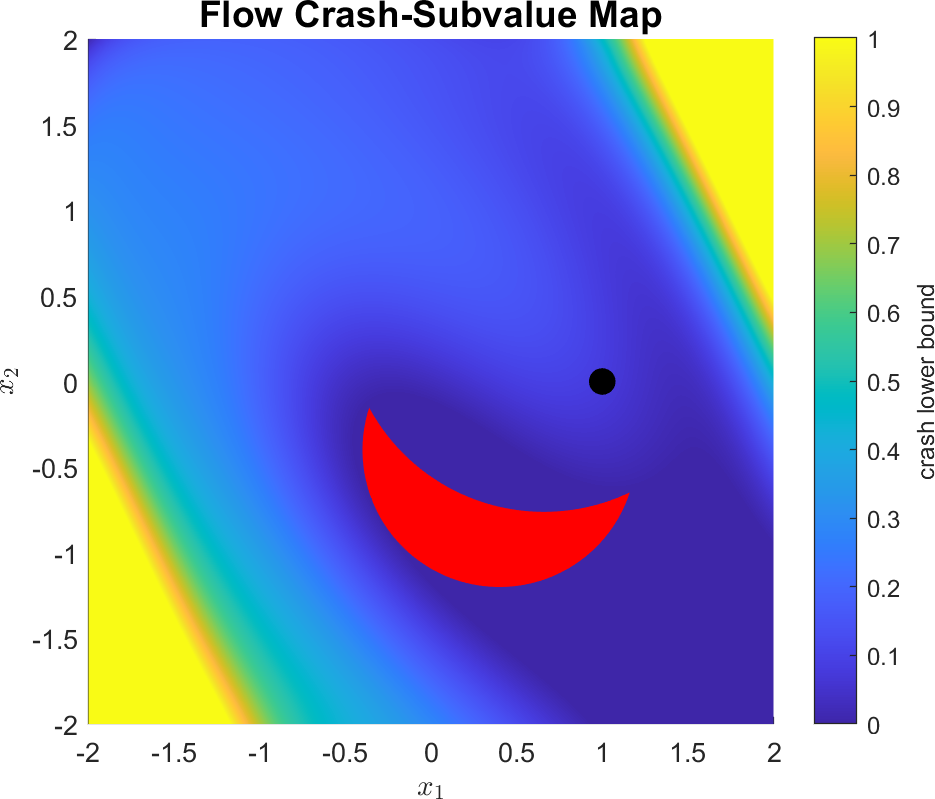}
    \caption{Subvalue map for the moon \eqref{eq:crash_moon} on the flow system \eqref{eq:flow_w1}}
    \label{fig:subvalue_moon}
\end{figure}

\subsection{Data-Driven Flow System}
\label{sec:crash_subvalue_data}

Data $\dc$ is collected for the Flow system \eqref{eq:flow} from $N_s=40$ samples with perfect knowledge in dynamics $\dot{x}_1 = x_2$ and a ground-truth \rw{uncertainty} bound of $\epsilon = 0.5$ in the coordinate $\dot{x}_2$. The noisy derivative data in $\dc$ and ground-truth derivatives are drawn in the orange and blue arrows respectively in Figure \ref{fig:flow_observations}.
It is assumed that $\dot{x}_2$ is described by a cubic polynomial in $(x_1, x_2)$. The parameterized polytope $\{w \mid A w \leq b + z\}$ ($\Omega$ with fixed $z$ value) has $L=10$ dimensions and $m=2 nT = 80$. The minimum possible corruption while obeying \eqref{eq:dynamics_affine_true} under the cubic \rw{uncertainty} model is $\inf_{(w, z) \in \Omega} z = 0.4617$. 

\begin{figure}[h]
    \centering
    \includegraphics[width=0.6\linewidth]{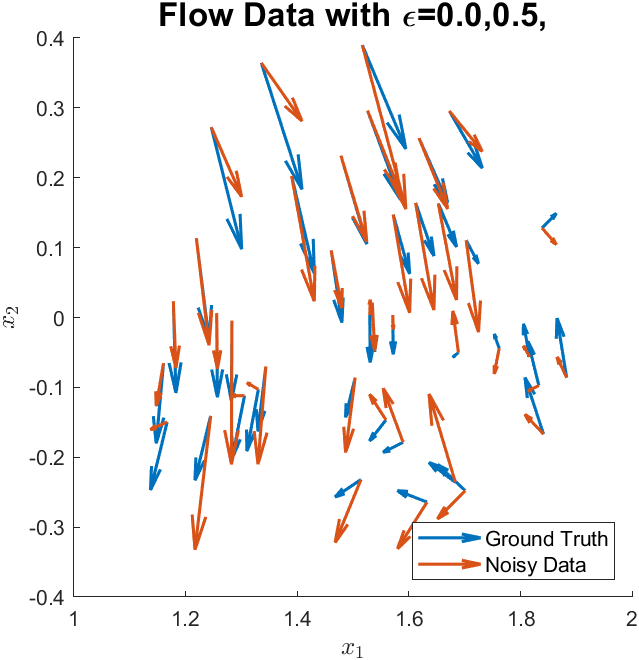}
    \caption{Observed data of the Flow system \eqref{eq:flow}}
    \label{fig:flow_observations}
\end{figure}

The crash-safety problem \eqref{eq:crash_cont} and subvalue problem \eqref{eq:q_joint} were solved with the unsafe set $X_u = \{x \mid (x_1+0.25)^2 + (x_2+0.7)^2 \leq 0.5^2, \ (0.95+x_1+x_2)/\sqrt{2 }\leq 0\}$ between $t=[0, 5]$ time units in the space $X = \{x \in \R^2 : \norm{x}_2^2 \leq 8\}$. The subvalue problem \eqref{eq:q_joint} integrates over the uniform measure of the ball $X$.

Table \ref{tab:crash_subvalue_data} reports bounds for the crash-corruption $Q(X_0)$ by solving Lie-robustified \ac{SOS} tightenings of \eqref{eq:crash_cont} and \eqref{eq:q_joint} from degrees $1..4$ with $J_{\max} = 1, \ Q_{\max}=4$. The objective function (integrals of $q(x)$) for the subvalue  \eqref{eq:q_joint} are $J^*_{1:4} = [0.2193, 3.8185, $ $7.8326, 18.5945]$. The subvalue-estimated control cost at $X_0$ between degrees $1..4$ is $0.3399$ by Equation \eqref{eq:parametric_subvalue}. The subvalue-estimated bound is valid for all $x \in X$, and is therefore lower than the bound $q^*_4 = 0.5499$ from \eqref{eq:crash_cont} that focuses exclusively on the initial point $X_0$.

\begin{table}[h]
\centering
\caption{\label{tab:crash_subvalue_data} Data-Driven Crash-bounds at $X_0 = [1; 0]$ under \ac{SOS} tightenings}
\begin{tabular}{lllll}
order    & 1                     & 2      & 3      & 4   \\
specific \eqref{eq:crash_cont} & 
$0.0582$ & $0.4423$  & 0.4864& 0.5499   \\
subvalue \eqref{eq:q_joint} & $6.180\times 10^{-3}$ & 0.1829 & 0.3399 & \rw{0.3399}\\
\end{tabular}
\end{table}

Figure \ref{fig:flow_data_driven_subvalue} plots the subvalue function from \eqref{eq:parametric_subvalue} on the data-driven flow system. Subvalues in the plot are clamped to the range $[0, J_{\max}] = [0, 1]$.
\begin{figure}[!h]
    \centering
    \includegraphics[width=0.6\linewidth]{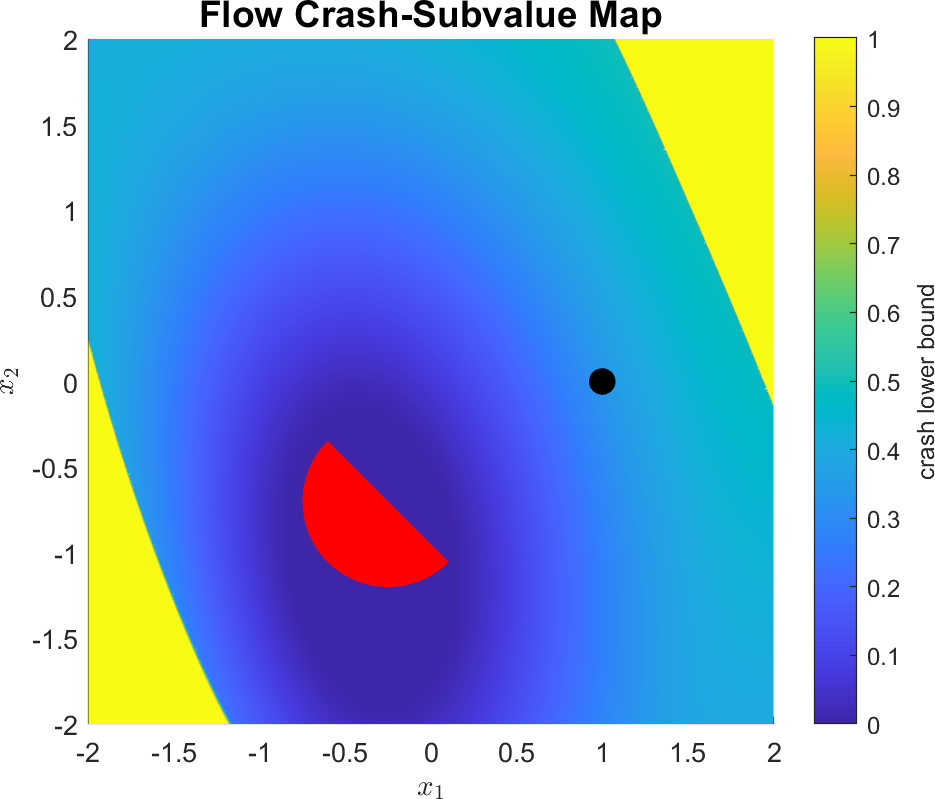}
    \caption{Subvalue for data-driven \eqref{eq:flow} between degrees $1..4$}
    \label{fig:flow_data_driven_subvalue}
\end{figure}

Safety of trajectories starting in $X_0$ is certified because the crash-bound $\tilde{q}^*_4 = 0.5499$ is greater than the ground-truth \rw{uncertainty}-bound $\epsilon = 0.5$.

\rw{The} CasADi optimal control suite \cite{andersson2019casadi} \rw{was used} to numerically solve the crash program \eqref{eq:crash_traj}, \rw{and the produced trajectory is visualized in Figure \ref{fig:flow_casadi}}. The numerical crash-bound of $q^{\textrm{CasADI}} = 0.5499$ is approximately equal (up to four decimal places) to the crash-bound $q^*_4=0.5499$.

\begin{figure}[h]
    \centering
    \includegraphics[width=0.7\linewidth]{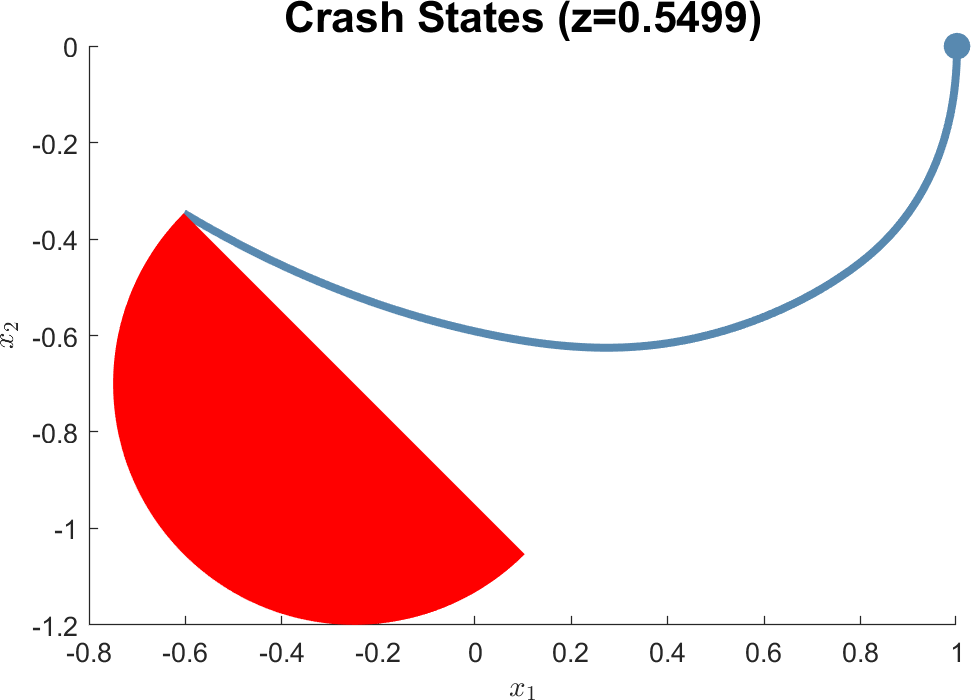}
    \caption{Numerically computed crash-bound for data-driven Flow \eqref{eq:flow}}
    \label{fig:flow_casadi}
\end{figure}

The left plot of figure \ref{fig:crash_casadi_control} shows the applied control of the $L=10$ inputs. The right plot demonstrates how the polytopic input constraint is obeyed with respect to the crash bound $q^{\textrm{CasADI}} = 0.5499$ (upper and lower black lines).

 \begin{figure}[!ht]
     \centering
     \begin{subfigure}[b]{0.48\linewidth}
         \centering
         \includegraphics[width=\linewidth]{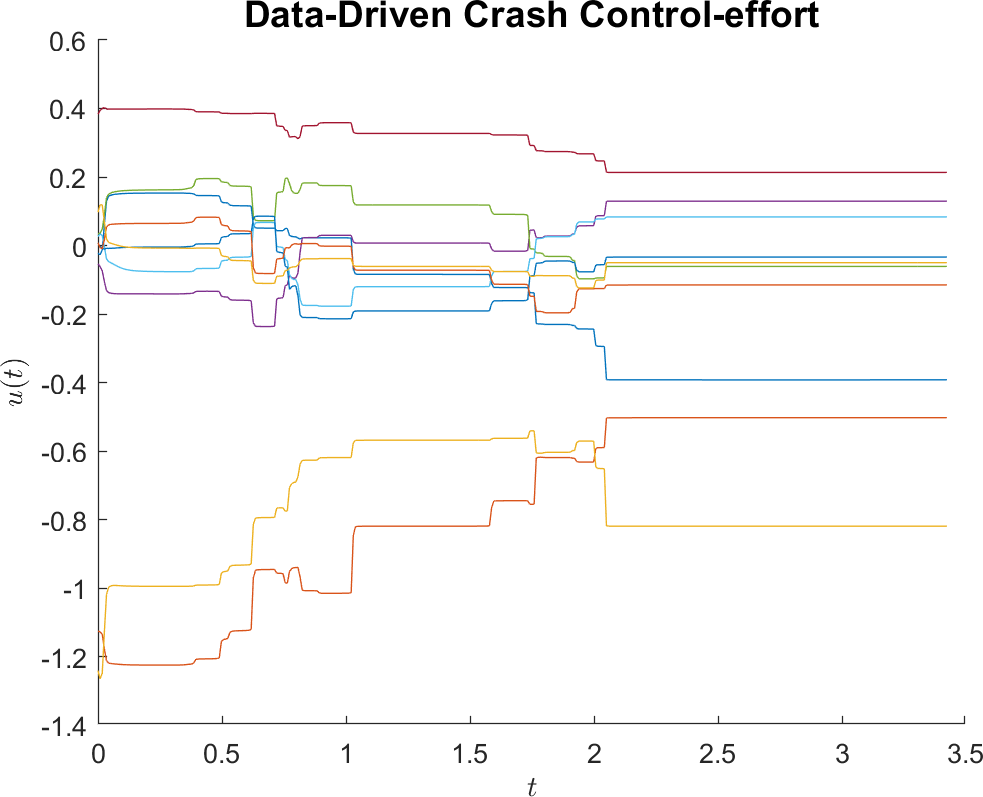}
     \end{subfigure}
     \;
     \begin{subfigure}[b]{0.48\linewidth}
         \centering
         \includegraphics[width=\linewidth]{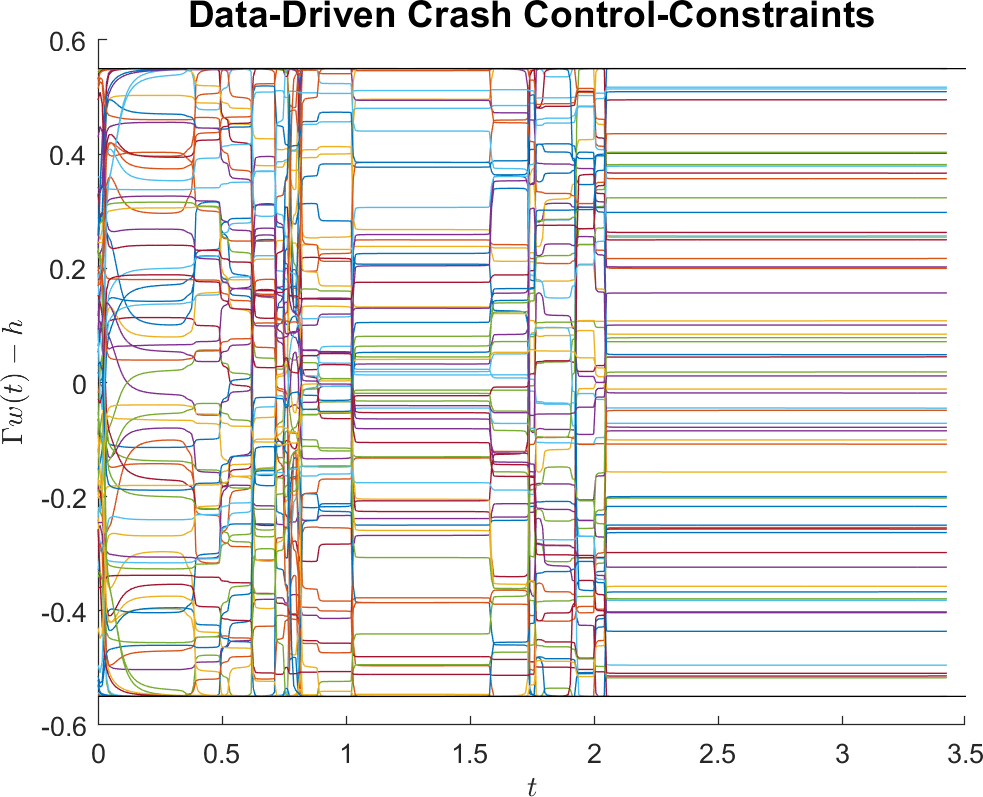}
     \end{subfigure}
      \caption{\label{fig:crash_casadi_control}Applied control for the data-driven Flow crash system.}
\end{figure}

 These crash-bounds should be compared against the $L_2$ distance estimates of $c^*_{1:5} = [1.698\times 10^{-5}, \ 0.1936, \ $ $0.2003, \ 0.2009, \ 0.2013]$  from Section 6.3 of \cite{miller2023robustcounterpart}. The distance estimates do not indicate that adding an additional budget of $0.0499$ constraint violation will cause at least one trajectory to enter the unsafe set.




\section{Conclusion}

\label{sec:conclusion}

This paper utilized peak minimizing control in order to perform safety analysis. The returned values from \ac{SOS} programs are lower-bounds on the maximal control effort needed to crash into the unsafe set.  Crash-safety adds a new perspective on the safety of trajectories, covering some of the blind spots of distance estimation and safety margins. Crash-safety may be applied in the context of data-driven systems analysis, by quantifying the minimum tolerable corruption in an \rw{uncertainty} model before a trajectory is at risk of being unsafe.

Future work involves attempting to reduce computational burden of the Crash programs \eqref{eq:crash_sos} by identifying new kinds of structure (in addition to robust decompositions) to hopefully allow for real-time computation. 
Other extensions could include applying these methods to other classes of systems (e.g., discrete-time, hybrid), and creating a stochastic interpretation of crash-safety.

\section*{Acknowledgements}

The authors thank Necmiye Ozay and Alain Rapaport for discussions regarding crash-safety and peak-minimizing control.




\bibliographystyle{IEEEtran}
\bibliography{references.bib}

\end{document}